\documentclass[reqno]{amsart}%[a4paper,10pt,reqno]{amsart}

\usepackage{mathrsfs, amsfonts, amsmath, amssymb, amscd}
\usepackage{amsbsy,bm, amsthm, enumerate}
\usepackage{textcomp}
\usepackage[english]{babel}
\usepackage{amsmath,amssymb,amsthm}
\usepackage[centering]{geometry}
\usepackage{xcolor}
\usepackage{caption}
\usepackage{booktabs}
\usepackage{graphicx}
\usepackage{subcaption}
\usepackage{amsmath}  % 数学宏包
\usepackage{hyperref} % 开启超链接
\usepackage{hyperref}
\hypersetup{
    hidelinks = true        % 隐藏所有链接的边框和颜色
}
\renewcommand{\epsilon}{\varepsilon}            %%  changes epsilon
\newtheorem{theorem}{Theorem}[section]   %% definition of theorem environment
\newtheorem*{theorem*}{Theorem}          %% a theorem environment without numbering
\newtheorem{lemma}[theorem]{Lemma}

\theoremstyle{definition}

\newtheorem{example}{Example}[section]  %

\numberwithin{equation}{section}

\title[inverse curvature flow]
{On length-preserving and area-preserving inverse curvature flows in the hyperbolic plane}

\author{Zhishuai Liu\and Guoxin Wei}

\address{School of Mathematical Sciences, South China Normal University,
	Guangzhou 510631, People's Republic of China}
\email{liuzs@m.scnu.edu.cn}

\address{School of Mathematical Sciences, South China Normal University,
	Guangzhou 510631, People's Republic of China}
\email{weiguoxin@tsinghua.org.cn}

\makeatletter
\@namedef{subjclassname@2020}{ Mathematics Subject Classification}
\makeatother

\begin{document}
	
	\begin{abstract}
In this paper, we study the area-preserving and length-preserving $\kappa^\alpha$-type curvature flows of smooth, closed, convex curves in the two-dimensional hyperbolic plane $\mathbb H^2$ for $\alpha<0$ and prove that convexity is preserved along the flows. Assuming that the flows exist for all time, we show that the evolving curves converge smoothly to geodesic circles. Furthermore,  we also derive a sufficient condition for global existence of the flows.
\end{abstract}
	
	\maketitle

	%%%%%%%%%%%%%%%%%%%%%%
	\section{Introduction}\label{sect:1}
	%%%%%%%%%%%%%%%%%%%%%%
A one-parameter family of curves $X:S^1\times[0,T)\to M^2(K)$ satisfying
\begin{equation}
\left\{
\begin{aligned}
&\frac{\partial X}{\partial t} = \bigl(\phi_i(t)-\kappa^\alpha\bigr)\nu, \\
&X(u,0)=X_0,
\end{aligned}
\right.\ \ \ \ \   i=1,2,\  \alpha\neq 0,
\label{1.1}
\end{equation}
is called the $\kappa^\alpha$-type curvature flow,
where $M^2(K)$ is the simply connected two-dimensional space form of constant sectional curvature $K\in\{-1,0\}$ (i.e. $M^2(0)$ is Euclidean plane $\mathbb{R}^2$, $M^2(-1)$ is hyperbolic plane $\mathbb{H}^2$), the initial curve $X_0$ is strictly convex, $\alpha$ is a constant, $\kappa$ denotes the curvature of $\gamma_t=X(S^1,t)\subset M^2(K)$, and $\nu$ denotes the unit normal to $\gamma_t$. The term $\phi_i(t)$ is chosen so that the flow preserves either the enclosed area $A(t)$ or the length $L(t)$ of $\gamma_t$.
More precisely, if
\begin{equation}
\phi_1(t)=\frac{1}{L(t)}\int_{\gamma_t}\kappa^\alpha\,ds,
\label{1.2}
\end{equation}
then the flow \eqref{1.1} preserves the area $A(t)$ of the domain $\Omega_t$ enclosed by $\gamma_t$. If
\begin{equation}
\phi_2(t)=\frac{1}{\int_{\gamma_t}\kappa\,ds}\int_{\gamma_t}\kappa^{\alpha+1}\,ds,
\label{1.3}
\end{equation}
then the flow \eqref{1.1} preserves the length $L(t)$ of $\gamma_t$. A smooth closed curve $\gamma_t$ is called convex if its curvature $\kappa$ is positive. By the Gauss--Bonnet theorem applied to the domain enclosed by $\gamma_t$ in $M^2(K)$, one  has $\int_{\gamma_t}\kappa\,ds=2\pi-KA(t)$.
\par
The flow \eqref{1.1} has been studied extensively in the Euclidean plane $\mathbb{R}^2$.
For $\alpha>0$, Tsai and Wang \cite{18} proved that the $\kappa^\alpha$-type flow in $\mathbb{R}^2$ evolves convex curves smoothly to round circles in both the area-preserving and length-preserving cases.
For $\alpha<0$, Gao, Pan and Tsai \cite{7,8} studied the corresponding flow for embedded, convex and smooth initial curves. They showed that the length-preserving flow preserves the length while the enclosed area is non-decreasing, whereas the area-preserving flow preserves the enclosed area while the length is non-increasing.
In contrast to the flow considered in \cite{13,16}, singularities may develop along the flow \eqref{1.1}; in particular, the curvature may blow up, as illustrated by explicit examples in \cite{7,8}.
They further proved that, if the flow exists for all time, or equivalently, if no finite-time curvature blow-up occurs, then the evolving curve converges smoothly to a round circle as time tends to infinity.

In \cite{20}, Wei and Yang studied the flows \eqref{1.1} for $\alpha>0$ in hyperbolic plane $\mathbb{H}^2$, that is, they studied the area-preserving and length-preserving $\kappa^\alpha$-type curve flow in the hyperbolic plane $\mathbb{H}^2$. Using Tso's method, they established a time-independent upper bound for the curvature and proved that the evolving curves converge smoothly and exponentially to a geodesic circle.
For further results in this direction in hyperbolic space, we refer the reader to \cite{2,3,21,22}.

Motivated by the results of Wei and Yang (\cite{20}) on the flow \eqref{1.1} for $\alpha>0$ in the hyperbolic plane $\mathbb{H}^2$, in this paper,  we study the flow \eqref{1.1} for $\alpha<0$ in $\mathbb{H}^2$. Our main result is as follows.

\begin{theorem}
Assume that $\alpha<0$ and that $\gamma_0(S^1)$ is a smooth, closed and convex curve. Consider the flow \eqref{1.1}, and assume that the curvature $\kappa$ does not blow up in finite time during the evolution. Then the flow \eqref{1.1}, with $\phi_1(t)$ defined by \eqref{1.2} $($resp. $\phi_2(t)$ by \eqref{1.3}$)$, admits a unique smooth solution $\gamma_t$ for  $t\in [0,\infty)$. Moreover, $\gamma_t$ remains convex and converges smoothly and exponentially as $t\to\infty$ to a geodesic circle enclosing the same area as $\gamma_0$ (resp. having the same length as $\gamma_0$).
\end{theorem}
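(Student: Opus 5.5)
We will work in the tangent-angle parametrization. Since $\gamma_t$ is convex, the normal angle $\theta$ can serve as a parameter, and the curvature becomes a function $\kappa(\theta,t)$. In $\mathbb H^2$ the tangent angle is not a global coordinate, so we will instead work with the support function relative to a fixed point, or equivalently write the evolution equations intrinsically in arclength.

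\textbf{Step 1: evolution equations.} For a normal speed $F=\phi_i-\kappa^\alpha$ in $\mathbb H^2$ we have
\[
\partial_t \kappa = -F_{ss} - (\kappa^2 -1)F
= (\kappa^\alpha)_{ss} + (\kappa^2-1)(\kappa^\alpha-\phi_i),
\]
and also $\partial_t(ds)=\kappa F\,ds$. From these we check that $A$ (resp.\ $L$) is preserved, using $\int\kappa\,ds=2\pi+A$. We then show that $L$ is nonincreasing in the area-preserving case, and that $A$ is nondecreasing in the length-preserving case. The tools are Hölder-type inequalities, since $\kappa\mapsto\kappa^\alpha$ is decreasing and convex for $\alpha<0$, together with Gage-type inequalities in $\mathbb H^2$. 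Combined with the isoperimetric inequality $L^2\ge 4\pi A+A^2$, this gives uniform two-sided bounds on $L$ and $A$.

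\textbf{Step 2: preservation of convexity and a lower bound on $\kappa$.} We apply the maximum principle to $v=\kappa^\alpha$ (for $\alpha<0$ this blows up exactly when $\kappa\to0$). It satisfies
\[
v_t = \alpha\kappa^{\alpha-1}\bigl[v_{ss} + (\kappa^2-1)(v-\phi_i)\bigr].
\]
At a spatial maximum of $v$, that is, a minimum of $\kappa$, we have $v\ge\phi_i$. The term $-\alpha\kappa^{\alpha-1}\kappa^2\cdot(\text{positive})$ has a bad sign, so the naive argument fails. Instead we will bound $\phi_i$ from above using the bounded $L,A$ together with the assumption that $\kappa$ does not blow up, so that $\kappa\le C(T)$ on finite intervals. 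With that bound the ODE comparison $\frac{d}{dt}v_{\max}\le C(T)v_{\max}$ forbids finite-time blow-up of $v_{\max}$, and hence convexity is preserved on every finite interval. Short-time existence then follows from parabolicity, since $\alpha\kappa^{\alpha-1}<0$ makes the equation for $\kappa$ with $-\kappa^\alpha$ speed parabolic. Standard Krylov–Safonov/Schauder estimates give higher regularity. Together with the no-blow-up hypothesis, this yields existence for all $t\in[0,\infty)$ and uniqueness.

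\textbf{Step 3: uniform bounds and convergence, which I expect to be the main obstacle.} The difficulty is obtaining curvature bounds independent of $t$. We use the monotone quantity from Step 1: in the area case $L$ is decreasing and bounded below, and in the length case $A$ is increasing and bounded above. Its time derivative is integrable on $[0,\infty)$, for example
\[
\int_0^\infty\!\!\int_{\gamma_t}(\kappa-\bar\kappa)(\kappa^\alpha-\phi_1)\,ds\,dt<\infty,
\]
which is nonpositive pointwise in sign structure. From this we extract a sequence of times along which $\kappa$ is $L^2$-close to a constant. We then upgrade to all times via integral estimates for $\int(\kappa_s)^2$-type or entropy quantities $\int\kappa^{\alpha+1}\,ds$, in the spirit of Gao–Pan–Tsai. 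Hyperbolic Bonnesen inequalities bound the inradius and circumradius, which gives a uniform positive lower bound on $\kappa$ and a uniform upper bound (the latter via Tso's method applied to $\frac{\kappa^\alpha\cdot}{u-\delta}$ with support function $u$ about the incenter). Once $\kappa$ is pinched, higher derivative bounds are uniform. Convergence to a geodesic circle then follows from the vanishing of the monotone defect, the circle being fixed by the preserved $A$ or $L$. Exponential convergence comes from linearizing at the geodesic circle: the linearized operator $\alpha\kappa_\infty^{\alpha-1}(\partial_s^2+\kappa_\infty^2-1)$, restricted to the constraint space (mean zero, with modes $\cos\theta,\sin\theta$ removed by the constraint/isometries), has a spectral gap. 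This gives exponential decay of $\|\kappa-\kappa_\infty\|_{C^k}$.
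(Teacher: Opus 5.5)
Your outline has the same architecture as the paper: monotonicity of $L$ or $A$, a maximum principle for convexity, bounds on $\rho_\pm$, regularity, subsequential convergence, and linearization. However, the uniform lower curvature bound, on which everything else rests, is not established. What you dismiss as the naive argument is exactly the paper's proof of Lemma 3.1. At a spatial minimum of $\kappa$ one has $(\kappa^\alpha)_{ss}=\alpha\kappa^{\alpha-1}\kappa_{ss}\le 0$ and $\phi_i\le\kappa_{\min}^\alpha$, since both $\phi_1$ and $\phi_2$ are weighted averages of $\kappa^\alpha$. So when $\kappa_{\min}<1$, the reaction term $(\kappa^2-1)(\phi_i-\kappa^\alpha)$ is a product of two nonpositive factors and $\kappa_{\min}$ cannot decrease. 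Hence $\kappa\ge\min\{\min_{\gamma_0}\kappa,1\}$ for all time, with no upper curvature bound needed. You split $\kappa^2-1$ and looked only at the $\kappa^2$ part.

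Your substitute does not work in general. After discarding the helpful $-1$ part, at the maximum of $v=\kappa^\alpha$ you only have $\partial_t v\le-\alpha\kappa^{\alpha+1}(v-\phi_i)\le-\alpha v^{2+1/\alpha}$. For $\alpha<-1$ the factor $\kappa_{\min}^{\alpha+1}$ blows up as $\kappa_{\min}\to0$, so $\kappa\le C(T)$ does not give $\frac{d}{dt}v_{\max}\le C(T)v_{\max}$, and the actual inequality is superlinear. Where it does work (for $-1\le\alpha<0$) it only gives a $T$-dependent bound. Your later claim that bounds on $\rho_\pm$ produce a uniform positive lower bound on $\kappa$ is false, because radius bounds do not exclude nearly flat arcs. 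Fix also the sign convention. The formulas $\partial_t\kappa=-F_{ss}-(\kappa^2-1)F$ and $\partial_t ds=\kappa F\,ds$ are those for an outward $\nu$, and with them your own equation $v_t=\alpha\kappa^{\alpha-1}v_{ss}+\cdots$ is backward parabolic for $\alpha<0$. With the inner normal, as in the paper, $\partial_t\kappa=-(\kappa^\alpha)_{ss}+(\kappa^2-1)(\phi_i-\kappa^\alpha)$ and $\partial_t ds=-\kappa F\,ds$.

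The second gap is the time-independent upper bound on $\kappa$. Curvature blow-up is a genuine phenomenon for $\alpha<0$ (see the Euclidean examples of Gao--Pan--Tsai cited in the introduction), which is why the theorem assumes it away. Your Tso step uses only a priori data ($\rho_\pm$ and a support function about the incenter) and never invokes the hypothesis. If it worked, it would remove the hypothesis altogether, which neither the paper nor its Section 4 achieves. Moreover, Tso's quotient is designed to bound the inward speed from above, which here is trivial ($\phi_i-\kappa^\alpha<\phi_i$). An upper bound on $\kappa$ instead requires a positive lower bound on $\kappa^\alpha$.

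The paper does not derive such a bound; it treats the hypothesis as a bounded-curvature assumption (cf. Section 5). This, together with Lemma 3.1 and Lemma 3.4, makes the radial-graph equation uniformly parabolic and yields all higher-order estimates via a one-dimensional H\"older estimate and Schauder theory. Subsequential convergence modulo isometries then comes from $L'\to0$ or $A'\to0$ and the equality case of Lemma 3.2. Convergence of the whole flow, at an exponential rate, follows from linear stability of the geodesic circle plus uniqueness. So no Gao--Pan--Tsai-type integral upgrade is needed, and your final linearization step matches the paper's. Finally, the Bonnesen inequalities of Lemma 2.1 do not by themselves bound $\rho_+$, because $(L-\sqrt{\Delta})/A=(4\pi+A)/(L+\sqrt{\Delta})$ need not exceed $1$. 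The paper uses Alexandrov reflection instead, although $\rho_+\le\operatorname{diam}\Omega_t\le L(t)/2\le L(0)/2$ would already suffice.
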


The paper is organized as follows. In Section 2, we collect some preliminary facts on the geometry of curves in the hyperbolic plane and derive the evolution equations associated with the flow \eqref{1.1}. In Section 3, we prove that convexity is preserved along the flow. Under the assumption that the curvature does not blow up in finite time, we then derive uniform geometric estimates for the evolving domains and show that the flow exists for all time and converges smoothly and exponentially to a geodesic circle. In Section 4, we establish a sufficient condition for global existence. In Section 5, we present an example for which the curvature remains bounded for all time.\\

\noindent {\bf Acknowledgments}. The authors thank Yong Wei for helpful discussions.

\section{Preliminaries}
	%=================================================

In this section, we collect some preliminary facts on the geometry of curves in the hyperbolic plane and derive the evolution equations for the relevant geometric quantities along the flow \eqref{1.1}.

	%%%%%%%%%%%%%%%%%%%%%%%%%%%%%%%%%%%%%%%%%%%%%
	\subsection{Hyperbolic geometry}\label{sect:2.1}~
	%%%%%%%%%%%%%%%%%%%%%%%%%%%%%%%%%%%%%%%%%%%%%
	
The hyperbolic space $\mathbb{H}^{n+1}$, $n\geq 1$ can be viewed as a warped product
manifold
$(R_+ \times S^n,\overline{g})$ with
 \begin{equation*}
   \overline{g}=dr^2 +\varphi^2 (r) g_{s^n},
 \end{equation*}
where $\varphi(r) =\sinh r$ and $g_{s^n}$ is the round metric on unit sphere $S^n$.
Let $\overline{\nabla}$ be the Levi-Civita connection on $\mathbb{H}^{n+1}$.
It is well known that the vector field $V=\varphi(r)\partial r$ is a conformal Killing
field satisfying $\overline{\nabla} V=\varphi'(r)\overline{g}$.
\par
Let $M$ be a smooth closed curve  in $H^2$.
The induced metric, the unit inner normal vector and the curvature of $M$ are denoted by $g$, $\nu$ and $\kappa$, respectively.
Here convexity means that the curvature satisfies $\kappa>0$, while h-convexity is equivalent to $\kappa\geq 1$.
\par
Recall that the inner radius $\rho_-$ and outer radius $\rho_+$ of a bounded
domain $\Omega$ are defined by
\begin{equation*}
  \rho_- = sup\{\rho:B_{\rho}(p) \subset  \Omega ~for~ some~ p\in \mathbb{H}^{n+1}\}
\end{equation*}
and
\begin{equation*}
  \rho_+ = inf\{\rho:\Omega  \subset B_{\rho}(p)  ~for~ some~ p\in \mathbb{H}^{n+1}\},
\end{equation*}
where $B_{\rho}(p)$ denotes the geodesic ball of radius $\rho$ centered
at $p$ in $\mathbb{H}^{n+1}$.
If $\Omega$ is $h$-convex in $\mathbb H^2$ (equivalently, if its boundary $\partial\Omega$ is $h$-convex), then its outer radius is uniformly controlled by its inner radius (see \cite{4,14}):
\begin{equation}\label{}
  \rho_+\leq c(\rho_- +\rho_-^{\frac{1}{2}})
  \label{2.1}
\end{equation}
for some constant $c>0$. This estimate is crucial for obtaining a priori $C^0$ estimates for hyperbolic curvature flow, which explains why previous work has focused on the $h$-convexity condition.

For a convex domain $\Omega$ in the hyperbolic plane $\mathbb{H}^2$, \eqref{2.1} is not directly applicable. Instead, the following estimates for $\rho_-$ and $\rho_+$ in terms of the isoperimetric deficit hold (see \cite[\S 18.6]{15}).
\begin{lemma}
Let $\Omega$ be a bounded convex domain in the hyperbolic plane $\mathbb{H}^2$. Set
$L:=\operatorname{Length}(\partial\Omega)$, $A:=\operatorname{Area}(\Omega),
\Delta:=L^2-4\pi A-A^2,$
where $\Delta$ denotes the isoperimetric deficit. Then the following estimates for the inner and outer radii in terms of the isoperimetric deficit hold:
\begin{equation}\label{2.2}
\Delta \geq \bigl[L-A\coth(\tfrac{\rho_-}{2})\bigr]^2,
\end{equation}
\begin{equation}\label{2.3}
\Delta \geq \bigl[A\coth(\tfrac{\rho_+}{2})-L\bigr]^2.
\end{equation}
\end{lemma}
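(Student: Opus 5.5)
The plan is a hyperbolic Bonnesen-type argument: bound $\Delta$ from below through the quadratic $f(r)=L\sinh r - A\cosh r - 2\pi\cosh r+2\pi$ (or an equivalent normalization), and evaluate it at $r=\rho_-$ and $r=\rho_+$. The key input is the hyperbolic Bonnesen inequality, which will be proved by integral geometry in the form of Santaló's kinematic formula. For a convex domain $\Omega$ and a geodesic disk $B_r$ of radius $r$, the kinematic formula reads
\begin{equation*}
\int \chi(\Omega\cap gB_r)\,dg = 2\pi\bigl(A+|B_r|\bigr) + L\cdot \ell(r) - A\,|B_r|,
\end{equation*}
with $|B_r|=2\pi(\cosh r-1)$ and $\ell(r)=2\pi\sinh r$.

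Next I split the motions $g$ according to how $gB_r$ sits relative to $\Omega$. Let $n_i$ be the measure of motions with $gB_r\subset\Omega$ and $n_e$ the measure with $\Omega\subset gB_r$. For every other motion with nonempty intersection, $\chi=1$, since the intersection of convex sets is convex. Santaló's second formula for the measure of motions whose boundaries cross, $\int \#(\partial\Omega\cap\partial gB_r)\,dg=4L\ell(r)$, together with $\#\ge2$ on those motions, then yields the hyperbolic Bonnesen inequality: for $\rho_-\le r\le\rho_+$, where $n_i=n_e=0$,
\begin{equation*}
L\sinh r - A\cosh r - 2\pi(\cosh r-1)\ \le\ 0 .
\end{equation*}
I expect that, instead of redoing integral geometry, I may simply cite this inequality from \cite[\S 18.6]{15}, which is what the paper relies on.

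The key algebraic step comes next. Setting $t=\tanh(r/2)$, we have $\sinh r=2t/(1-t^2)$ and $\cosh r-1=2t^2/(1-t^2)$. Multiplying by $(1-t^2)/2>0$ turns the inequality into
\begin{equation*}
Lt - A\tfrac{1+t^2}{2} - 2\pi t^2 \le 0,\qquad\text{i.e.}\quad (A+4\pi)t^2 - 2Lt + A \ge 0 .
\end{equation*}
Writing $x=\coth(r/2)=1/t$ and multiplying by $x^2$ gives $g(x):=Ax^2-2Lx+(A+4\pi)\ge 0$ for $x\in[\coth(\rho_+/2),\coth(\rho_-/2)]$. The discriminant of $g$ is $4(L^2-4\pi A-A^2)=4\Delta$, which is exactly why $\Delta$ appears. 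Since $g$ opens upward and is nonpositive between its roots $x_\pm=(L\pm\sqrt\Delta)/A$, nonnegativity of $g$ on the whole interval places that interval outside $(x_-,x_+)$. The orientation must be checked with the disk case: there $\Delta=0$ and $x_\pm=\coth(\rho/2)$, and the right reading is $\coth(\rho_-/2)\le x_+$ and $\coth(\rho_+/2)\ge x_-$, together with the bounds $\coth(\rho_-/2)\ge L/A\ge\coth(\rho_+/2)$. These last bounds follow from the inequality $L\sinh r\le A\cosh r+2\pi(\cosh r-1)$ applied at the endpoints, or directly from area/length monotonicity under inclusion. The bound $\coth(\rho_-/2)\le x_+$ gives $A\coth(\rho_-/2)-L\le\sqrt\Delta$, and squaring (the left side being $\ge0$) yields \eqref{2.2}. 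The bound $\coth(\rho_+/2)\ge x_-$ gives $L-A\coth(\rho_+/2)\le\sqrt\Delta$, and squaring yields \eqref{2.3}.

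The main obstacle is the sign and root bookkeeping. Nonnegativity of $g$ on an interval alone does not say which root lies on which side; that needs the extra fact that $\rho_\pm$ correspond to actual inclusions of disks. I would handle it by evaluating the kinematic identity at $r=\rho_-$, where $n_i\ge0$ forces $g(\coth(\rho_-/2))\le$ something of the right sign, and symmetrically at $\rho_+$. The deep part, the Bonnesen inequality itself, I would take from \cite{15}.
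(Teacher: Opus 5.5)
Your strategy is the standard integral-geometric route to the result the paper cites from Santal\'o without proof: a hyperbolic Bonnesen inequality from the kinematic and Poincar\'e formulas, then $x=\coth(r/2)$ to get a quadratic with discriminant $4\Delta$. But you state the key inequality with the wrong sign, and every later step inherits it. In $\mathbb{H}^2$ the product term in the kinematic formula is $-KA|B_r|=+A|B_r|$; your $-A|B_r|$ is the spherical one. Also, the counting argument you describe can only bound $L\ell(r)$ from below. Let $m$ be the measure of motions whose boundaries cross. Poincar\'e's formula with $\#\ge 2$ gives $m\le 2L\ell(r)$. So for $\rho_-<r<\rho_+$,
\[
0=n_i+n_e=\int\chi\,dg-m\ge 2\pi(A+|B_r|)+A|B_r|-L\ell(r),
\]
which simplifies to
\[
L\sinh r-A\cosh r-2\pi(\cosh r-1)\ \ge\ 0 .
\]
The version with $\le 0$ is false. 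Take the $b$-neighbourhood of a geodesic segment of length $2a$: then $\rho_-=b$, $\rho_+=a+b$, $L=4a\cosh b+2\pi\sinh b$ and $A=4a\sinh b+2\pi(\cosh b-1)$. The left-hand side equals $4a\sinh(r-b)-2\pi(\cosh(r-b)-1)$, which is $>0$ for $b<r\le a+b$. So $g\ge 0$ on $[\coth(\rho_+/2),\coth(\rho_-/2)]$ is false. The \emph{right reading} you then adopt ($\coth(\rho_-/2)\le x_+$ and $\coth(\rho_+/2)\ge x_-$) actually contradicts your own inference that this interval lies outside $(x_-,x_+)$. Your suggested repair via $n_i\ge 0$ at $r=\rho_-$ does not supply it.

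With the correct sign the proof is shorter. You get $g(x)=Ax^2-2Lx+A+4\pi\le 0$ for all $x\in[\coth(\rho_+/2),\coth(\rho_-/2)]$, with the endpoints by continuity. Since $A>0$, this forces $\Delta\ge 0$ and places the whole interval inside $[x_-,x_+]$. Hence $|A\coth(\rho_\pm/2)-L|\le\sqrt{\Delta}$, which gives \eqref{2.2} and \eqref{2.3} at once. Use this two-sided bound instead of a one-sided bound plus a sign check. Your auxiliary claim $\coth(\rho_-/2)\ge L/A$ is false in $\mathbb{H}^2$: for a large equilateral geodesic triangle, $A<\pi$ while $L\to\infty$ and $\rho_-\to\tfrac12\ln 3$. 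So the bracket in \eqref{2.2} can be negative, and your squaring step would fail there. Your proposed justifications could not have located the endpoints relative to the vertex $L/A$ of $g$ in any case. The sign of $g$ at an endpoint does not do it, and inclusion monotonicity only compares $A$ and $L$ with the disk's separately.
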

%=================================================
	%%%%%%%%%%%%%%%%%%%%%%%%%%%%%%%%%%%%%%%%%%%%%
	\subsection{Evolution equations along the flow (1.1)}\label{sect:2.2}~
	%%%%%%%%%%%%%%%%%%%%%%%%%%%%%%%%%%%%%%%%%%%%%

Let $X(u,t)$, $u\in S^1$ be the position of the evolving curves
$\gamma_t=X(S^1,t)$ and $g=|\nabla_u X|$.
We define the arc-length parameter $s$ by
\begin{equation*}
  \frac{\partial }{\partial s}=\frac{1}{g}\frac{\partial}{\partial u}.
\end{equation*}
Denote the unit tangent vector by
\begin{equation*}
  \mathbf{t}=\frac{\partial X}{\partial s}=\frac{1}{g}\frac{\partial X}{\partial u}.
\end{equation*}
The arclength element of the curve $\gamma_t$ is represented by $ds=g du$.
The curvature is determined by the derivative of $\mathbf{t}$ with respect to the arc-length parameter through $\partial_s \mathbf{t}=\kappa \nu.$
\par
The following evolution equations are standard and can be derived as in \cite{10}.
\begin{lemma}
Along the flow \eqref{1.1} in the hyperbolic plane $\mathbb{H}^2$,
we have the following evolution equations.\\
(1) The evolution of the length element $ds$ is given by
 \begin{equation}\label{1}
   \frac{\partial}{\partial t}ds=-(\phi_i -\kappa^{\alpha})\kappa ds.
 \end{equation}
(2) The derivative of $\nu$ with respect to time $t$ is
 \begin{equation}
   \nabla_t \nu=\frac{\partial }{\partial s}\kappa^{\alpha} \mathbf{t}.
 \end{equation}
(3) The evolution of curvature $\kappa$ is given by
 \begin{equation}
    \frac{\partial \kappa}{\partial t}=-\frac{\partial^2}{\partial s^2}(\kappa^{\alpha})+(\kappa^2-1)(\phi_i-\kappa^{\alpha}).
    \label{2.6}
 \end{equation}
\end{lemma}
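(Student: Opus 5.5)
The statement consists of the three evolution formulas, and I would derive them by the standard moving-frame computation in $\mathbb{H}^2$, as in \cite{10}. Write the normal speed as $F:=\phi_i-\kappa^\alpha$, so $\partial_t X=F\nu$. I would use three facts. First, $\partial_t$ and $\partial_u$ commute on the immersion, i.e. $\nabla_t\partial_u X=\nabla_u\partial_t X$ (torsion-freeness of $\overline\nabla$). Second, with $\nu$ the inner normal we have $\nabla_s\mathbf t=\kappa\nu$ and $\nabla_s\nu=-\kappa\mathbf t$. Third, the curvature of $\mathbb H^2$ is $-1$, so $\overline R(X,Y)Z=-(\langle Y,Z\rangle X-\langle X,Z\rangle Y)$.

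For (1), I would compute
\[
\partial_t g^2=2\langle\nabla_t\partial_uX,\partial_uX\rangle=2\langle\nabla_u(F\nu),\partial_uX\rangle=2gF\langle\nabla_u\nu,\mathbf t\rangle=-2g^2F\kappa .
\]
Hence $\partial_t g=-F\kappa g$, which is exactly $\partial_t ds=-(\phi_i-\kappa^\alpha)\kappa\,ds$. From this I also get the commutator
\[
\partial_t\partial_s=\partial_s\partial_t+F\kappa\,\partial_s .
\]

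For (2), I would use that $\nu$ stays a unit vector, so $\nabla_t\nu$ is tangential, with
\[
\langle\nabla_t\nu,\mathbf t\rangle=-\langle\nu,\nabla_t\mathbf t\rangle .
\]
Next, $\nabla_t\mathbf t=\nabla_t(g^{-1}\partial_uX)=F\kappa\,\mathbf t+\nabla_s(F\nu)=F\kappa\,\mathbf t+F_s\nu-F\kappa\,\mathbf t=F_s\nu$. Therefore $\nabla_t\nu=-F_s\mathbf t=\partial_s(\kappa^\alpha)\,\mathbf t$, since $\phi_i$ depends only on $t$.

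For (3), I would differentiate $\kappa=\langle\nabla_s\mathbf t,\nu\rangle$ in time:
\[
\partial_t\kappa=\langle\nabla_t\nabla_s\mathbf t,\nu\rangle+\langle\nabla_s\mathbf t,\nabla_t\nu\rangle .
\]
The second term vanishes because $\nabla_s\mathbf t\parallel\nu$ and $\nabla_t\nu\parallel\mathbf t$. For the first term, I would use the commutator together with the curvature term:
\[
\nabla_t\nabla_s\mathbf t=\nabla_s\nabla_t\mathbf t+F\kappa\nabla_s\mathbf t+\overline R(\partial_tX,\partial_sX)\mathbf t .
\]
The three pieces then give:
\begin{itemize}
\item $\nabla_s(F_s\nu)=F_{ss}\nu-F_s\kappa\,\mathbf t$, whose normal part is $F_{ss}$;
\item $F\kappa\nabla_s\mathbf t=F\kappa^2\nu$;
\item $\overline R(F\nu,\mathbf t)\mathbf t=-F\nu$, using the sign convention for which $\mathbb H^2$ gives the $-1$.
\end{itemize}
Summing, $\partial_t\kappa=F_{ss}+(\kappa^2-1)F$. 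Substituting $F_{ss}=-\partial_s^2(\kappa^\alpha)$ yields \eqref{2.6}.

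The only delicate point is the sign bookkeeping. I need to track the orientation of the inner normal and the curvature-tensor convention so that the ambient term contributes $-F$; this is what turns the Euclidean $\kappa^2F$ into $(\kappa^2-1)F$. I would check it against geodesic circles of radius $r$, which have $\kappa=\coth r$ and move with $\dot r=-F$. Then
\[
\dot\kappa=-\operatorname{csch}^2 r\,\dot r=(\coth^2 r-1)F,
\]
which agrees with the formula and confirms the signs.
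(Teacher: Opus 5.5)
Your derivation is correct, and the signs work out. The key steps are $\partial_t g=-F\kappa g$, the commutator $\partial_t\partial_s=\partial_s\partial_t+F\kappa\,\partial_s$, $\nabla_t\mathbf t=F_s\nu$ (hence $\nabla_t\nu=\partial_s(\kappa^\alpha)\mathbf t$), and the ambient curvature contribution $-F$ to the normal component, which together give exactly \eqref{2.6}. The paper gives no proof of its own and simply defers to the standard moving-frame computation in \cite{10}; what you carried out is that standard computation.
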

	
	%%%%%%%%%%%%%%%%%%%%%%%%%%%%%%%%%%%%%%%%%%%%%
	\section{Proof of Theorem 1.1}\label{sect:3}
%%%%%%%%%%%%%%%%%%%%%%%%%%%%%%%%%%%%%%%%%%%%%

    \subsection{Preserving convexity }\label{sect:3.1}
	\begin{lemma}\label{lem:3.1}
 If the initial curve $\gamma_0$ is smooth, closed and convex, then the solution $\gamma_t$ of \eqref{1.1} remains convex for  $t>0$.
    \end{lemma}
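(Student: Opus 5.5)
The plan is to apply the maximum principle to the speed function $u=\kappa^{\alpha}$ instead of to $\kappa$ itself. Since $\alpha<0$, the statement $\kappa\to 0^+$ is the same as $u\to+\infty$. So it is enough to show that $\max_{\gamma_t}u$ cannot increase once it is larger than $1$. This would give the uniform lower bound
\begin{equation*}
\kappa(\cdot,t)\ \geq\ \min\Bigl\{\min_{\gamma_0}\kappa,\ 1\Bigr\}>0
\end{equation*}
on the whole interval of existence of the smooth solution of \eqref{1.1}. Here we work on $[0,T)$, on which $\kappa>0$ holds at least for short time by continuity. The bound then shows that $\kappa$ can never reach $0$ before $T$, by a standard continuity argument.

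The first step is to derive the evolution of $u$ from \eqref{2.6}. Using $\partial_t u=\alpha\kappa^{\alpha-1}\partial_t\kappa$, we get
\begin{equation*}
\frac{\partial u}{\partial t}=-\alpha\kappa^{\alpha-1}\frac{\partial^2 u}{\partial s^2}+\alpha\kappa^{\alpha-1}(\kappa^2-1)(\phi_i-u).
\end{equation*}
Because $\alpha<0$ and $\kappa>0$, the coefficient $-\alpha\kappa^{\alpha-1}$ is positive, so the equation is parabolic. Let $u_{\max}(t)=\max_{\gamma_t}u$. By Hamilton's trick, $u_{\max}$ is Lipschitz, and its upper derivative is bounded by $\partial_t u$ evaluated at a point where the maximum is attained. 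At such a point $\partial_s^2u\leq 0$, so the diffusion term is $\leq 0$.

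The second step is to sign the reaction term at the maximum point. Both $\phi_1$ and $\phi_2$ are weighted averages of $u$ with positive weights. For $\phi_1$ the weight is $ds/L$. For $\phi_2$ the weight is $\kappa\,ds/\int\kappa\,ds$, which is positive because $\kappa>0$ and $\int_{\gamma_t}\kappa\,ds=2\pi+A>0$. Hence $\phi_i\leq u_{\max}$, so $\phi_i-u\leq 0$ at the maximum point. If moreover $u_{\max}>1$, then at that point $\kappa<1$, so $\kappa^2-1<0$. The product $(\kappa^2-1)(\phi_i-u)$ is therefore $\geq 0$, and multiplying by $\alpha\kappa^{\alpha-1}<0$ makes the reaction term $\leq 0$. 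Consequently $\frac{d}{dt}u_{\max}\leq 0$ whenever $u_{\max}>1$, which yields $u_{\max}(t)\leq\max\{u_{\max}(0),1\}$, that is, the displayed lower bound on $\kappa$.

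The main obstacle is the nonlocal term $\phi_i$, since in general it could push $u$ up at its maximum. This is handled by the averaging observation above, which needs $\kappa>0$ on the interval under consideration. That requirement is harmless: we argue on the maximal interval on which $\kappa>0$ and the solution is smooth, and the uniform positive lower bound shows that this interval cannot end because $\kappa$ degenerates to $0$. The remaining issue is the sign coming from $\kappa^2-1$, which is why the threshold $1$ appears in the bound. This threshold is specific to curvature $-1$ and has no analogue in the Euclidean case.
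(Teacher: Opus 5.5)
Your proof is correct and follows the paper's argument: the paper applies the maximum principle to $\kappa$ at its minimum, using that $\phi_i\le\kappa^\alpha$ there (since $\phi_i$ is a weighted average of $\kappa^\alpha$) and that $\kappa^2-1<0$ below the threshold $1$, and obtains exactly your bound $\kappa\ge\min\{\min_{\gamma_0}\kappa,1\}$. Working with $u=\kappa^\alpha$ at its maximum is the same point with the same signs. It spares you checking that $-\partial_s^2(\kappa^\alpha)\ge 0$ at a minimum of $\kappa$ when $\alpha<0$, and you also spell out details the paper leaves implicit: positivity of the weights in $\phi_2$, Hamilton's trick, and the continuity argument on the maximal interval.
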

\begin{proof}
The proof is based on the application of the maximum principle to the evolution equation \eqref{2.6} for the curvature $\kappa$. By assumption, the initial curve $\gamma_0$ is convex, that is, its curvature $\kappa$ is positive on $\gamma_0$. By continuity, $\kappa$ remains positive on $\gamma_t$ for a short time. For each $t_0>0$, let $u_0\in S^1$ be a point such that
\begin{equation*}
  \kappa(u_0,t_0)=\min_{\gamma_{t_0}} \kappa(\cdot,t_0).
\end{equation*}
Then, at $(u_0,t_0)$, one has
$\frac{\partial \kappa}{\partial s}=0$ and
$\frac{\partial^2 \kappa}{\partial s^2}\geq 0.$
By the definitions of $\phi_i(t)$ in \eqref{1.2} and \eqref{1.3}, we have
$\phi_i(t_0)\leq \kappa^\alpha(u_0,t_0).$
If $\kappa(u_0,t_0)\geq 1$, the conclusion follows immediately. Otherwise, \eqref{2.6} yields
\begin{equation*}
  \left.\frac{\partial}{\partial t}\kappa\right|_{(x_0,t_0)} \geq 0.
\end{equation*}
The parabolic maximum principle (see \cite{11}) implies that
  \begin{equation*}
    \kappa(u_0,t_0)\geq min\{\min_{\gamma_{t_0}} \kappa(\cdot,0),1\}=:\underline{\kappa}_0>0.
  \end{equation*}
This proves the lemma.
 \end{proof}
 \subsection{Monotonicity }\label{sect:3.2}~

Let $L(t)$ denote the length of $\gamma_t$ and $A(t)$ denote the area of the
domain $\Omega_t$ enclosed by $\gamma_t$.
Define the isoperimetric deficit by
\begin{equation*}
  \Delta(t)=L^2(t)-4\pi A(t)-A^2(t).
\end{equation*}
We recall the following generalized H\"older inequality (see  [1, Lemma I3.3]):
\begin{lemma}\label{lem:3.2}
Let $M$ be a compact manifold with a volume form $d\mu$,
and let $\xi$ be a continuous function on $M$.
Then for any non-decreasing function $F$,
\begin{equation*}
\int_M  \xi d\mu \int_M F(\xi) \, d\mu \leq \int_M d\mu \int_M \xi F(\xi) \, d\mu.
\end{equation*}
If $F$ is strictly increasing, then equality holds if and only if  $\xi$ is a constant.
\end{lemma}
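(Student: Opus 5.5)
The natural route is the classical symmetrization (Chebyshev-type) argument. The key identity is
\[
\int_M\int_M (\xi(x)-\xi(y))\bigl(F(\xi(x))-F(\xi(y))\bigr)\,d\mu(x)\,d\mu(y)\ \ge 0 ,
\]
which holds pointwise under the integral because $F$ is non-decreasing: if $\xi(x)\ge \xi(y)$ then $F(\xi(x))\ge F(\xi(y))$, and symmetrically. So the integrand is a product of two factors with the same sign. Continuity of $\xi$ and compactness of $M$ ensure that $\xi$ is bounded. I will also assume $F$ is measurable (monotone functions are Borel), so $F(\xi)$ is bounded on the bounded range of $\xi$ and every integral involved is finite.

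Next I expand the double integral using Fubini. The two diagonal terms $\xi(x)F(\xi(x))$ and $\xi(y)F(\xi(y))$ each contribute $\int_M d\mu\int_M \xi F(\xi)\,d\mu$. The two cross terms each contribute $-\int_M\xi\,d\mu\int_M F(\xi)\,d\mu$. This gives
\[
2\Bigl(\int_M d\mu\int_M\xi F(\xi)\,d\mu-\int_M\xi\,d\mu\int_M F(\xi)\,d\mu\Bigr)\ge 0,
\]
which is exactly the inequality of Lemma~\ref{lem:3.2}.

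For the equality case, assume $F$ is strictly increasing and equality holds. Then the nonnegative integrand must vanish $\mu\times\mu$-almost everywhere. With $F$ strictly increasing, the product is strictly positive whenever $\xi(x)\ne\xi(y)$. Hence $\xi(x)=\xi(y)$ for almost every pair, so $\xi$ is a.e. equal to a constant $c$. To upgrade this to $\xi\equiv c$, I use continuity of $\xi$ together with the fact that a smooth volume form has full support (it charges every nonempty open set): if $\xi(x_0)\ne c$, then $\xi\ne c$ on a neighbourhood of $x_0$, which has positive measure, a contradiction. Conversely, if $\xi$ is constant both sides agree trivially.

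The only delicate point is this final passage from almost-everywhere to everywhere constancy. It relies on the full-support property of $d\mu$, which I will state explicitly as coming from $d\mu$ being a volume form on a manifold. The rest of the argument is routine.
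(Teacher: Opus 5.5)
Your proof is correct and complete. The paper gives no argument for this lemma; it only quotes it from [1, Lemma I3.3]. So there is no route to compare against, and your Chebyshev symmetrization
\[
\int_M\int_M \bigl(\xi(x)-\xi(y)\bigr)\bigl(F(\xi(x))-F(\xi(y))\bigr)\,d\mu(x)\,d\mu(y)\ \ge 0
\]
supplies the standard self-contained proof, including the equality case. Your step from a.e.\ constancy to everywhere constancy, using continuity of $\xi$ and the fact that a volume form charges every nonempty open set, is the right way to close the strict case.

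One small precision. Boundedness of $F(\xi)$ does not follow from the range of $\xi$ being bounded, since a monotone function on a bounded open interval can be unbounded. It follows because $\xi$ attains its extrema on the compact $M$, so that $F(\min_M\xi)\le F(\xi)\le F(\max_M\xi)$.

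An equivalent shorter proof integrates the pointwise inequality $(\xi-\bar\xi)(F(\xi)-F(\bar\xi))\ge 0$, where $\bar\xi$ is the $\mu$-mean of $\xi$; the cross terms cancel and leave the claim. That version needs $F$ to be defined at $\bar\xi$. Your double-integral version uses only values of $F$ on the range of $\xi$, which makes it slightly more robust, for instance when $M$ is disconnected and $F$ is given only on the range.
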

Then we can prove the following monotonicity of the isoperimetric deficit.
\begin{lemma}\label{lem:3.3}
Along the flow \eqref{1.1}, the isoperimetric deficit $\Delta(t)$ is monotone decreasing.
\end{lemma}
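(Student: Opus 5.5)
Differentiating $\Delta=L^2-4\pi A-A^2$ gives
\[
\Delta'(t)=2LL'-(4\pi+2A)A'.
\]
So the plan is to compute $L'$ and $A'$ from the evolution of $ds$ in the preceding lemma, and then show $\Delta'\le 0$ using Lemma \ref{lem:3.2}. I will treat the two cases $i=1,2$ separately. In each case exactly one of $L$, $A$ is constant, so one term of $\Delta'$ vanishes.

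\emph{First variations.} By \eqref{1},
\[
L'(t)=-\int_{\gamma_t}(\phi_i-\kappa^\alpha)\kappa\,ds .
\]
Since the normal velocity is $(\phi_i-\kappa^\alpha)\nu$ with $\nu$ the inner normal,
\[
A'(t)=-\int_{\gamma_t}(\phi_i-\kappa^\alpha)\,ds .
\]
Gauss--Bonnet gives $\int_{\gamma_t}\kappa\,ds=2\pi+A$. Convexity from Lemma \ref{lem:3.1} guarantees $\kappa>0$, so $\kappa^\alpha$ is well defined.

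\emph{Case $i=1$.} Here $\phi_1$ makes $A'=0$, so $\Delta'=2LL'$. We have
\[
L'=-\phi_1\int\kappa\,ds+\int\kappa^{\alpha+1}\,ds
=\frac{1}{L}\Bigl(L\int\kappa^{\alpha+1}\,ds-\int\kappa\,ds\int\kappa^\alpha\,ds\Bigr).
\]
Since $\alpha<0$, the function $F(\xi)=-\xi^\alpha$ is strictly increasing on $(0,\infty)$. Lemma \ref{lem:3.2} with $\xi=\kappa$ then gives
\[
-\int\kappa\,ds\int\kappa^\alpha\,ds\le -L\int\kappa^{\alpha+1}\,ds,
\]
that is, $\int\kappa\int\kappa^\alpha\ge L\int\kappa^{\alpha+1}$. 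Hence $L'\le0$ and so $\Delta'\le0$.

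\emph{Case $i=2$.} Here $\phi_2$ makes $L'=0$, so $\Delta'=-(4\pi+2A)A'$. We have
\[
A'=-\phi_2L+\int\kappa^\alpha\,ds
=\frac{1}{\int\kappa\,ds}\Bigl(\int\kappa\,ds\int\kappa^\alpha\,ds-L\int\kappa^{\alpha+1}\,ds\Bigr)\ge0
\]
by the same inequality. Since $4\pi+2A>0$, we get $\Delta'\le0$.

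Equality in Lemma \ref{lem:3.2} holds only when $\kappa$ is constant, so $\Delta$ is strictly decreasing unless $\gamma_t$ is a geodesic circle.

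The step most likely to go wrong is the sign bookkeeping. This includes the orientation of $\nu$ in the formulas for $L'$ and $A'$, and applying Lemma \ref{lem:3.2} with a decreasing $\kappa^\alpha$ (hence the use of $-\kappa^\alpha$). A consistency check: in the Euclidean limit this reproduces the known facts that the area-preserving flow has non-increasing length and the length-preserving flow has non-decreasing area, as in \cite{7,8}.
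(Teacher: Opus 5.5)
Your argument is correct and has the same skeleton as the paper's proof: compute $L'$ and $A'$ from the evolution of $ds$, observe that one of them vanishes, and show the other has the right sign. The difference is in how you obtain the key inequality
\[
L\int_{\gamma_t}\kappa^{\alpha+1}\,ds\le\int_{\gamma_t}\kappa\,ds\int_{\gamma_t}\kappa^{\alpha}\,ds .
\]
The paper does not invoke Lemma~\ref{lem:3.2} at this point. It writes the two H\"older inequalities \eqref{3.3} and \eqref{3.4}, with conjugate exponents $1-\alpha$ and $\frac{1-\alpha}{-\alpha}$, and multiplies them, so that the powers of $\int\kappa^{\alpha}ds$ and $\int\kappa\,ds$ each add up to exactly one. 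You instead apply the Chebyshev-type inequality of Lemma~\ref{lem:3.2} once, with the strictly increasing function $F(\xi)=-\xi^{\alpha}$. Both routes use $\alpha<0$ essentially: the paper needs it for the exponents to be admissible, and you need it for $-\xi^\alpha$ to be increasing.

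Your route has several advantages. It is shorter. It gives the equality case ($\kappa$ constant) directly from the strictness clause of Lemma~\ref{lem:3.2}, rather than from the equality case of H\"older. It also works verbatim if $\kappa^\alpha$ is replaced by any decreasing function of $\kappa$, whereas the H\"older trick is tied to the power law. Finally, it is the tool the paper itself uses later at \eqref{3.18}; strictly speaking one should take $F(\xi)=-\xi^\alpha$ there as you do, since $\xi^\alpha$ is decreasing. The paper's route has only the minor merit of relying on the standard H\"older inequality.

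Incidentally, your formula
$A'=\bigl(\int\kappa\,ds\bigr)^{-1}\bigl(\int\kappa\,ds\int\kappa^{\alpha}ds-L\int\kappa^{\alpha+1}ds\bigr)$
in the case $i=2$ is the correctly signed one. The bracket in the paper's display \eqref{3.6} has the two products interchanged; as printed it is $\le 0$ by \eqref{3.3}--\eqref{3.4}, although the conclusion $A'\ge 0$ stated there is right.
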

\begin{proof}
With $\phi_1(t)$ given by \eqref{1.2}, we have
\begin{equation}\label{}
  \frac{\partial A}{\partial t}=-\int_{\gamma_t}(\phi_1(t)-\kappa^{\alpha})ds=0,
  \label{3.1}
\end{equation}
\begin{equation}\label{}
  \frac{\partial L}{\partial t}
  \\
  =-\int_{\gamma_t} \kappa(\phi_1(t)-\kappa^{\alpha})ds
  \\
  =\frac{1}{L}(\int_{\gamma_t} gdu \int_{\gamma_t}g\kappa^{\alpha+1}du-\int_{\gamma_t} \kappa gdu \int_{\gamma_t}g\kappa^{\alpha}du).
  \label{3.2}
\end{equation}
Applying the following two H\"older inequalities to equation \eqref{3.2}
\begin{equation}\label{}
  \int_{\gamma_t} gdu\leq(\int_{\gamma_t} g\kappa^{\alpha}du)^{\frac{1}{1-\alpha}} (\int_{\gamma_t} \kappa gdu)^{\frac{-\alpha}{1-\alpha}},
  \label{3.3}
\end{equation}
\begin{equation}\label{}
  \int_{\gamma_t}g\kappa^{\alpha+1}du\leq (\int_{\gamma_t} \kappa gdu)^{\frac{1}{1-\alpha}}(\int_{\gamma_t} g\kappa^{\alpha}du)^{\frac{-\alpha}{1-\alpha}}.
  \label{3.4}
\end{equation}
Substituting \eqref{3.3} and \eqref{3.4} into \eqref{3.2} yields the following equation
\begin{equation}\label{}
  \frac{\partial L}{\partial t}\leq 0.
  \label{3.5}
\end{equation}
Moreover, equality holds in \eqref{3.5} if and only if $\kappa$ is a constant
on $\gamma_t$ which means that $\gamma_t$ is a geodesic circle.
Combining \eqref{3.1} and \eqref{3.5} implies that  $\Delta(t)$  is monotone decreasing.
\par
Similarly, with $\phi_2(t)$ given by \eqref{1.3}, we have
 \begin{equation}\label{}
    \frac{\partial A}{\partial t}=\frac{1}{\int_{\gamma_t} \kappa ds}(\int_{\gamma_t} ds \int_{\gamma_t}\kappa^{\alpha+1}ds-\int_{\gamma_t} \kappa ds \int \kappa^{\alpha}ds)\geq 0,
    \label{3.6}
  \end{equation}
and
  \begin{equation}\label{}
    \frac{\partial L}{\partial t}=\int_{\gamma_t} \kappa(\phi_2(t)-\kappa^{\alpha})ds=0.
  \end{equation}
It follows that the isoperimetric deficit  $\Delta(t)$ is decreasing as well.
\end{proof}
\subsection{Long time existence and convergence }
 \begin{lemma}\label{lem:3.4}
Let $\gamma_t$, $t\in[0,T)$ be a smooth solution of the flow \eqref{1.1}
starting from a smooth, closed and convex curve $\gamma_0$.
Denote by $\rho_-(t)$ and $\rho_+(t)$ the inner and outer radii of the domain
$\Omega_t$ enclosed by $\gamma_t$.
Then there exist positive constants $c_1$, $c_2$ depending only on $\gamma_0$ such that
\begin{equation}\label{3}
   0<c_1\leq \rho_- \leq \rho_+\leq c_2
 \end{equation}
 for all time $t\in[0,T)$.
\end{lemma}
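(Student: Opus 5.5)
We will combine the monotonicity of Lemma \ref{lem:3.3} with the inner/outer radius bounds of Lemma 2.1 (inequalities \eqref{2.2} and \eqref{2.3}). By Lemma \ref{lem:3.1}, each $\Omega_t$ is convex, so Lemma 2.1 applies at every time. By Lemma \ref{lem:3.3}, $0\le \Delta(t)\le \Delta(0)$; the lower bound is the hyperbolic isoperimetric inequality. It then suffices to bound $L(t)$ and $A(t)$ from above and below by constants depending only on $\gamma_0$, and to turn \eqref{2.2}, \eqref{2.3} into bounds on $\rho_\pm$.

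\textbf{Step 1: two-sided control of $L$ and $A$.}
\begin{itemize}
\item Area-preserving case ($\phi_1$): $A(t)=A(0)$, and $L$ is nonincreasing by \eqref{3.5}. The isoperimetric inequality gives
\[
L(t)^2\ge 4\pi A(0)+A(0)^2 .
\]
Hence $\sqrt{4\pi A(0)+A(0)^2}\le L(t)\le L(0)$.
\item Length-preserving case ($\phi_2$): $L(t)=L(0)$, and $A$ is nondecreasing by \eqref{3.6}. The isoperimetric inequality $4\pi A+A^2\le L(0)^2$ bounds $A$ above. So $A(0)\le A(t)\le C(L(0))$.
\end{itemize}
In both cases $L$ and $A$ lie in fixed compact subintervals of $(0,\infty)$.

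\textbf{Step 2: upper bound on $\rho_+$.} From \eqref{2.3}, $A\coth(\rho_+/2)\le L+\sqrt{\Delta}\le L+\sqrt{\Delta(0)}$. Therefore
\[
\coth(\rho_+/2)\le \frac{L+\sqrt{\Delta(0)}}{A}.
\]
This is bounded above by a constant larger than $1$ in both cases, using Step 1. We must check that it forces $\rho_+$ away from $\infty$. This requires that the bound cannot equal $1$, which it does not. But a bound on $\coth(\rho_+/2)$ from above only gives a lower bound on $\rho_+$, which is the wrong direction.

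So Step 2 must instead use \eqref{2.3} to get a bound on $\rho_-$, and treat $\rho_+$ separately. The sign choices behave as follows:
\begin{itemize}
\item \eqref{2.2} gives $A\coth(\rho_-/2)\ge L-\sqrt{\Delta}$. This only helps if $L>\sqrt{\Delta}$, which holds since $L^2-\Delta=4\pi A+A^2>0$.
\item It also gives $A\coth(\rho_-/2)\le L+\sqrt{\Delta}$. This yields $\coth(\rho_-/2)\le (L+\sqrt{\Delta(0)})/A\le C$, hence $\rho_-\ge c_1>0$.
\end{itemize}
That is the lower bound.

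\textbf{Step 3: bound on $\rho_+$.} From \eqref{2.3}, $A\coth(\rho_+/2)\ge L-\sqrt{\Delta}$, so
\[
\coth(\rho_+/2)\ge \frac{L-\sqrt{\Delta}}{A}=\frac{4\pi+A}{L+\sqrt{\Delta}}.
\]
This forces $\rho_+$ bounded above only if the right side exceeds $1$ by a uniform margin, i.e.\ $4\pi+A>L+\sqrt\Delta$. That need not hold for elongated initial data.

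Instead I would use a direct argument, valid for any convex domain in $\mathbb{H}^2$. Take $p,q\in\overline{\Omega}$ realizing the diameter $d$; the segment $pq$ lies in $\Omega$ by convexity. The boundary curve, being a closed curve through $p$ and $q$, has length at least $2d$. Since $\rho_+\le d$, we get $\rho_+\le L/2\le C$ by Step 1. This gives $c_2$, and $\rho_-\le\rho_+$ is trivial.

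\textbf{Main obstacle.} The delicate point is choosing the correct branch of the squared inequalities \eqref{2.2}, \eqref{2.3} to get the lower bound on $\rho_-$. This is where uniform positivity of $A$ and uniform upper bounds on $L$ and $\Delta$ are essential.
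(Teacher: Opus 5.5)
Your proof is correct. For the lower bound on $\rho_-$ it matches the paper exactly: you take the branch $A\coth(\rho_-/2)\le L+\sqrt{\Delta}$ of \eqref{2.2} and combine it with $L(t)\le L(0)$, $A(t)\ge A(0)$ and $\Delta(t)\le\Delta(0)$.

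For the upper bound on $\rho_+$ you take a genuinely different and much more elementary route. The paper first observes, as you do, that \eqref{2.3} only gives $\coth(\rho_+/2)\ge (4\pi+A)/(L+\sqrt{\Delta})$, which need not exceed $1$. It then switches to Alexandrov reflection, in three steps:
\begin{itemize}
\item It imports the strict monotonicity of $S^+_\gamma(\Omega_t)$ from \cite[Lemma 6.1]{3}, which was proved under h-convexity. It re-justifies this on compact subintervals $[0,T_1]$, where $\underline{\kappa}_0\le\kappa\le C(T_1)$.
\item It deduces that $\Omega_t$ always meets a fixed ball $B_R(p)\supset\Omega_0$, and uses $L(t)\le L(0)$ to place the closest boundary point in $B_R(p)$.
\item It reflects to conclude $\gamma_t\subset B_{7R}(p)$.
\end{itemize}
Your bound $\rho_+\le\operatorname{diam}\Omega_t\le L(t)/2\le L(0)/2$ uses only the fact $L(t)\le L(0)$, which the paper needs inside its reflection argument anyway, and it avoids the parabolic machinery entirely. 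The paper's route additionally gives spatial localization: the curves stay in a fixed ball, not merely have bounded size. The lemma, however, concerns only the isometry-invariant quantities $\rho_\pm$, and the paper's later compactness argument recenters by isometries $\varphi_t$ anyway. So your version suffices for everything the lemma is used for.

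A few points to tidy:
\begin{itemize}
\item In Step 2, delete the false start. The bound on $\rho_-$ comes from \eqref{2.2}, not \eqref{2.3} as you wrote at one point.
\item In Step 3, say why the diameter of $\overline{\Omega_t}$ is realized by points of $\gamma_t$. Either extend the geodesic through an interior endpoint, or use convexity of $d(q,\cdot)$ along geodesics in $\mathbb{H}^2$. Only then does "the boundary curve passes through $p$ and $q$" make sense.
\item Also state that $\rho_+\le d$ because $\Omega_t\subset B_{d+\varepsilon}(p)$ for any $p\in\gamma_t$.
\item Your closing remark misplaces the difficulty. Choosing the branch of \eqref{2.2} is routine; the upper bound on $\rho_+$ is the step the paper treats as the real obstacle, and your length argument removes it.
\end{itemize}
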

\begin{proof}
Firstly, the lower bound on the inner radius $\rho_-(t)$ follows from the isoperimetric
inequality \eqref{2.2} and the monotonicity in Lemma \ref{lem:3.3}.
We have
\begin{equation}\label{}
  \coth(\frac{\rho_-(t)}{2})\leq\frac{L(t)+\sqrt{\Delta(t)}}{A(t)}
  \leq\frac{L(0)+\sqrt{\Delta(0)}}{A(0)}.
  \label{3.9}
\end{equation}
Note that the right hand side of (3.9) is strictly larger than 1.
Since coth is a strictly decreasing function, we obtain
\begin{equation}\label{}
  \rho_-(t)\geq2\coth^{-1}(\frac{L(0)+\sqrt{\Delta(0)}}{A(0)}).
\end{equation}

Next, to derive an upper bound for $\rho_+$, we cannot apply \eqref{2.3} directly. Indeed, arguing as in (3.9), we obtain only
\begin{equation}\label{}
  \coth(\frac{\rho_+(t)}{2})\geq\frac{L(t)-\sqrt{\Delta(t)}}{A(t)}
  \geq\frac{4\pi+A(0)}{L(0)+\sqrt{\Delta(0)}}.
  \label{3.11}
\end{equation}
However, we do not know whether the right-hand side of \eqref{3.11} is greater than $1$. If it is not, then \eqref{3.11} does not yield an upper bound for $\rho_+(t)$.
Instead, we derive an upper bound for $\rho_+(t)$ by means of the Alexandrov reflection method, following \cite[\S 4]{2}.
The method developed in \cite[\S 4]{2} was used to establish the shape estimate for hypersurfaces with positive sectional curvatures. The same argument also applies to convex curves, and we include it here for the convenience of the reader.
\par
Let $\gamma$ be a geodesic line in $\mathbb{H}^2$,
and let $H_{\gamma}(s)$ be the geodesic line in $\mathbb{H}^2$ which is perpendicular to
$\gamma$ at $\gamma(s)$,$s \in \mathbb{R}$.
We use the notation $H^+_s$ and $H^-_s$ for the half-planes in $\mathbb{H}^2$ determined by
$H_{\gamma}(s)$:
\begin{equation*}
  H_s^+:=\bigcup_{s'\geq s}H_{\gamma(s')},H_s^-:=\bigcup_{s'\leq s}H_{\gamma(s')}.
\end{equation*}
For a bounded domain $\Omega$ in $\mathbb{H}^2$,
denote \(\Omega^+(s) = \Omega \cap H^+_s\) and  $\Omega^-(s) = \Omega \cap H^-_s$.
The reflection map across $H_{\gamma}(s)$ is denoted by $R_{\gamma,s}$.
We define
\begin{equation*}
  S^+_{\gamma}(\Omega) := \inf\{ s \in \mathbb{R} \mid R_{\gamma,s}(\Omega^+(s)) \subset \Omega^-(s) \}.
\end{equation*}

It has been proved in \cite[Lemma 6.1]{3} that for any geodesic line \(\gamma\)
in $\mathbb{H}^2$, $S^+_\gamma(\Omega_t)$ is strictly decreasing along the flow
\eqref{1.1} unless $R_{\gamma,s}(\Omega_t) = \Omega_t$ for some $s \in \mathbb{R}$.
To prove this property in \cite[Lemma 6.1]{3},
the authors rewrite the flow as a uniformly parabolic PDE for the radial function and apply
the strong maximum principle.
Uniform parabolicity there was guaranteed by the $h$-convexity condition together with a uniform upper bound for the curvature.
In our case, we already proved the convexity of the curve $\gamma_t$ in Lemma 3.1.
To deal with the uniform upper curvature bound,
we first fix a time $T_1 \in [0, T)$ and then we have $\underline{\kappa}_0 \leq \kappa
\leq C(T_1)$ on this closed time interval $t \in [0, T_1]$.
Then the argument in \cite[Lemma 6.1]{3} implies that $S^+_\gamma(\Omega_t)$ is s
trictly decreasing in the interval $[0, T_1]$.
Since $T_1 < T$ is arbitrary, this property holds for all time $t \in [0, T)$.
\par
Choose $R>0$ such that the initial domain $\Omega_0$ is contained in some geodesic ball $B_R(p)$ of radius $R$ centered at a point $p\in\mathbb{H}^2$.
We claim that $\Omega_t\cap B_R(p)\neq\emptyset$ for all  $t\in[0,T).$
Indeed, suppose to the contrary that there exists some time $t\in[0,T)$ such that
$\Omega_t\cap B_R(p)=\emptyset.$
Choose a geodesic line $\gamma(s)$ such that there exists a geodesic line $L=H_\gamma(s_0)$ which is perpendicular to $\gamma$ and tangent to the geodesic circle $\partial B_R(p)$,
and such that $\Omega_t$ lies in the half-plane $H_{s_0}^+$. Since $\Omega_0\subset B_R(p)$, we have $R_{\gamma,s_0}\bigl(\Omega_0^+(s_0)\bigr)\subset \Omega_0^-(s_0),$
and hence $S_\gamma^+(\Omega_0)\le s_0$. Since $S_\gamma^+(\Omega_t)$ is decreasing along the flow, it follows that $S_\gamma^+(\Omega_t)\le s_0,$
and therefore
$R_{\gamma,s_0}\bigl(\Omega_t^+(s_0)\bigr)\subset \Omega_t^-(s_0).$
However, since $\Omega_t\subset H_{s_0}^+$, we have $\Omega_t^-(s_0)=\emptyset$, whereas
$R_{\gamma,s_0}\bigl(\Omega_t^+(s_0)\bigr)\neq\emptyset.$
This contradiction proves the claim.
\par
For any $t\in[0,T)$, let $x_1,x_2$ be points on $\gamma_t=\partial\Omega_t$ such that
$d(p,x_1)=\min\{d(p,x):x\in\gamma_t\}, d(p,x_2)=\max\{d(p,x):x\in\gamma_t\},$
where $d(\cdot,\cdot)$ denotes the hyperbolic distance. We first show that $x_1\in B_R(p)$. If not, then $d(p,x)\ge d(p,x_1)>R$ for all  $x\in\gamma_t.$
Since we have already proved that $\Omega_t\cap B_R(p)\neq\emptyset$, convexity implies that
$B_R(p)\subset \Omega_t.$
Hence $\Omega_0\subset B_R(p)\subset \Omega_t.$
Since both $\Omega_0$ and $\Omega_t$ are convex, the monotonicity of the boundary length with respect to the inclusion of convex sets implies that $L(t)>L(0)$ (see [19, \S2]), which contradicts the facts that $L(t)\le L(0)$ in the area-preserving case and $L(t)=L(0)$ in the length-preserving case. Therefore, $x_1\in B_R(p)$ in both cases.
If $x_2 \in B_R(p)$, then the diameter of $\Omega_t$ is bounded from above by $2R$.
Therefore it suffices to study the case $x_2 \notin B_R(p)$.
Let $\gamma(s)$ be the geodesic line passing through $x_1$ and $x_2$, i.e.,
there are $s_1 < s_2 \in \mathbb{R}$ such that $\gamma(s_1) = x_1$ and $\gamma(s_2) = x_2$.
We choose the geodesic line $L =H_{\gamma}(s_0)$ for some $s_0 \in (s_1, s_2)$ such that $L$
is perpendicular to $\gamma$ and is tangent to the boundary of $B_R(p)$ at $p \in \partial
B_R(p)$.
Let $q = \gamma(s_0)$ be the intersection point $\gamma \cap L$.
By the Alexandrov reflection property, $d(x_2, q) \leq d(x_1, q)$.
Then the triangle inequality implies:
\begin{equation}
\begin{split}
\begin{array}{rl}
d(p,x_2) & \leq d(p,x_1) + d(x_1,x_2) \leq d(p,x_1) + 2d(q,x_1) \\
& \leq d(p,x_1) + 2\bigl(d(q,p') + d(p',p) + d(p,x_1)\bigr) \leq 7R,
\end{array}
\end{split}
\end{equation}
where we used the fact that $x_1 \in B_R(p)$.
This shows that the diameter of $\Omega_t$ is uniformly bounded along the flow \eqref{1.1},
and the upper bound of $\rho_+(t)$ follows immediately.
\end{proof}

\begin{lemma}\label{lem:3.5}
Assume that $\alpha<0$ and that $\gamma_0$ is a smooth, closed and convex curve. Let
$\gamma(\cdot,t):S^1\times[0,T)\to\mathbb{H}^2$
be a convex solution of the flow \eqref{1.1} such that its curvature $\kappa(\cdot,t)$ does not blow up as $t\to T$. Then the evolving curve $\gamma(\cdot,t)$ converges to a smooth, closed and convex curve $\gamma(\cdot,T)$ as $t\to T$, and the flow \eqref{1.1} can be continued beyond time $T$.
\end{lemma}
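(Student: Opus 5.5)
The plan is the standard continuation argument: show that the curvature is bounded above and below on $[0,T)$, derive uniform bounds on all its arc-length derivatives, and then pass to a smooth limit at $t=T$ and restart the flow from it. The assumption that $\kappa$ does not blow up gives $\kappa\le C$ on $S^1\times[0,T)$. Lemma \ref{lem:3.1} gives $\kappa\ge\underline{\kappa}_0>0$. Hence $\kappa^\alpha$ and $\kappa^{\alpha-1}$ lie between two positive constants.

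With these bounds, the curvature equation \eqref{2.6} can be written as
\begin{equation*}
\partial_t\kappa=-\alpha\kappa^{\alpha-1}\partial_s^2\kappa-\alpha(\alpha-1)\kappa^{\alpha-2}(\partial_s\kappa)^2+(\kappa^2-1)(\phi_i-\kappa^\alpha),
\end{equation*}
which is uniformly parabolic because $-\alpha>0$. Moreover $\phi_i(t)$ is uniformly bounded, since it is an average of $\kappa^\alpha$ with respect to a positive weight.

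The nonlocal term and the time-dependent arclength parametrization are awkward, so I would pass to the tangent-angle/support-function description of the convex curves. Concretely, I would fix a point $p$ inside all $\Omega_t$ for $t$ near $T$. This is possible because Lemma \ref{lem:3.4} gives a uniform inner radius, and the normal speed is bounded, so the domains move at bounded rate. I would then write $\gamma_t$ as a radial graph $r(\theta,t)$ about $p$. Lemma \ref{lem:3.4} together with convexity gives $C^0$ bounds on $r$ and a uniform gradient bound, since the radial graph of a convex curve containing a ball of radius $c_1$ around $p$ has angle between normal and radial direction bounded away from $\pi/2$. The flow then becomes a scalar fully nonlinear parabolic equation $\partial_t r=F(r,r_\theta,r_{\theta\theta},t)$. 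The bounds on $\kappa$ give $C^2$ bounds on $r$ and uniform parabolicity. The operator is concave in $r_{\theta\theta}$ for $\alpha<0$: in the curvature variable, $-\kappa^\alpha$ is concave for $-1\le\alpha<0$. In general, since the equation is one-dimensional in space, Krylov--Safonov/Hölder estimates for the derivative equation suffice. These give $C^{2,\beta}$ bounds in space-time, treating $\phi_i(t)$ as a bounded measurable coefficient in time. Differentiating in $\theta$ and bootstrapping with Schauder estimates then yields uniform $C^k$ bounds for all $k$ on $[T/2,T)$.

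The main obstacle is the regularity of $\phi_i(t)$ in time, which enters the higher time-derivative estimates. I would handle it inductively: $\phi_i$ is an integral of $\kappa^\alpha$ or $\kappa^{\alpha+1}$ against $ds$, so its time derivative is controlled by spatial derivatives of $\kappa$ up to order two, by \eqref{2.6} and the evolution of $ds$. Thus $C^k$ bounds on $r$ give $C^{(k-2)/2}$ bounds on $\phi_i$, and the bootstrap closes.

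By Arzelà--Ascoli, $r(\cdot,t)$ converges in $C^\infty$ as $t\to T$; the limit is unique because $\partial_t r$ is bounded. The limit $\gamma(\cdot,T)$ is smooth and closed, and it is strictly convex since $\kappa\ge\underline{\kappa}_0$. Short-time existence for \eqref{1.1} from a smooth strictly convex initial curve, which holds since the equation is uniformly parabolic near such data, then continues the flow beyond $T$, and uniqueness glues the two solutions together.
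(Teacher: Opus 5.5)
Your proposal is correct and follows essentially the route the paper has in mind. The paper omits this proof by referring to Lemma 3.2 of [7], and its later regularity paragraph uses exactly your scheme: two-sided curvature bounds, a radial graph about a nearby inner-ball center, the one-dimensional $C^{2,\beta}$ H\"older estimate from Lieberman (which needs no concavity; incidentally $-\kappa^\alpha$ is concave in $\kappa$ for every $\alpha<0$, not only for $-1\le\alpha<0$), and Schauder bootstrapping. Your explicit handling of the time regularity of $\phi_i(t)$ and of the restart via short-time existence and uniqueness supplies details the paper leaves implicit.
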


\begin{proof}
The proof is the similar to that of \cite[Lemma 3.2]{7}, and we therefore omit the details.
\end{proof}

\begin{lemma}
Assume that $\alpha<0$ and that $\gamma_0$ is a smooth, closed and convex curve. Consider the flow \eqref{1.1}, and assume that the curvature $\kappa$ of $\gamma(\cdot,t)$ does not blow up in finite time during the evolution. Then the flow admits a unique convex solution
$\gamma(\cdot,t):S^1\times[0,\infty)\to\mathbb H^2$ defined for all time.
\end{lemma}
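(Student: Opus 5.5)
The plan is the standard continuation argument: short-time existence and uniqueness, followed by Lemma \ref{lem:3.5} applied at the maximal time.

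\emph{Short-time existence and uniqueness.} Since $\gamma_0$ is smooth and strictly convex, I would parametrize nearby convex curves in $\mathbb{H}^2$ by the normal angle, or as radial graphs over a geodesic circle centred at an interior point of $\Omega_0$. In these coordinates \eqref{1.1} becomes a scalar equation whose principal part is $-\partial_s^2(\kappa^\alpha)=-\alpha\kappa^{\alpha-1}\partial_s^2\kappa+\dots$. For $\alpha<0$ the coefficient $-\alpha\kappa^{\alpha-1}$ is positive, so the equation is strictly parabolic as long as $\kappa$ stays bounded between two positive constants. The nonlocal term $\phi_i(t)$ is a lower-order term depending only on $t$. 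Standard quasilinear parabolic theory, applied as in \cite{7,8} for the Euclidean case, then gives a unique smooth solution on some interval $[0,\epsilon)$.

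\emph{Continuation.} Let $T\le\infty$ be the maximal existence time of the smooth convex solution, and suppose $T<\infty$.
\begin{enumerate}[(i)]
\item By Lemma \ref{lem:3.1}, $\kappa\ge\underline{\kappa}_0>0$ on $[0,T)$. By hypothesis $\kappa$ does not blow up as $t\to T$, so $\underline{\kappa}_0\le\kappa\le C$ on $[0,T)$.
\item Lemma \ref{lem:3.4} gives $c_1\le\rho_-\le\rho_+\le c_2$, so the curves stay in a fixed compact region. Also $L(t)\le L(0)$ and $A(t)\ge A(0)$ in either case, so $\phi_i(t)$ is bounded above and below by positive constants.
\item Lemma \ref{lem:3.5} then shows that $\gamma(\cdot,t)\to\gamma(\cdot,T)$, a smooth, closed, convex curve, and that the flow continues beyond $T$. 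This contradicts maximality, so $T=\infty$.
\item Uniqueness on $[0,\infty)$ follows from short-time uniqueness on each subinterval by a standard connectedness argument. If two solutions agree on $[0,t_0]$, restarting from $\gamma_{t_0}$ shows they agree slightly beyond $t_0$. Hence the set of times where they agree is open and closed.
\end{enumerate}

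\emph{Main obstacle.} The delicate step is the content hidden in Lemma \ref{lem:3.5}: turning two-sided curvature bounds into bounds on all derivatives of $\kappa$. Uniform parabolicity comes from $\underline{\kappa}_0\le\kappa\le C$. To get higher regularity I would rewrite the curvature equation \eqref{2.6} as
\[
\partial_t\kappa=-\alpha\kappa^{\alpha-1}\kappa_{ss}-\alpha(\alpha-1)\kappa^{\alpha-2}\kappa_s^2+(\kappa^2-1)(\phi_i-\kappa^\alpha).
\]
I would first obtain a gradient bound on $\kappa_s$, via the maximum principle or Krylov--Safonov/Hölder estimates in the normal-angle parametrization, where the equation takes a divergence-type form. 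Schauder estimates and bootstrapping then give bounds on all derivatives uniform up to $t=T$. These yield smooth convergence as $t\to T$, and the short-time theorem restarts the flow.
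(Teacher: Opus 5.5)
Your proposal is correct and follows the paper's route. The paper's proof is just the remark that the statement follows from Lemma~\ref{lem:3.1} and Lemma~\ref{lem:3.5}, i.e.\ your steps (i) and (iii); your short-time existence and open--closed uniqueness arguments spell out what the paper leaves implicit. Two small remarks: step (ii) is not needed, because two-sided bounds on $\phi_i$ follow directly from $\underline{\kappa}_0\le\kappa\le C$ ($\phi_i$ is a weighted average of $\kappa^\alpha$). Also, $\mathbb{H}^2$ has no global parallel frame, so the Euclidean normal-angle parametrization does not carry over cleanly, and the radial-graph formulation (used by the paper after this lemma) is the one to rely on.
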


\begin{proof}
This follows immediately from Lemma \ref{lem:3.1} and Lemma \ref{lem:3.5}.
\end{proof}
\par

The curvature estimates obtained above, together with standard parabolic regularity theory, yield higher-order estimates. We first recall the graphical representation of curves in $\mathbb H^2$. A convex curve $\gamma$ is star-shaped with respect to the center $p$ of an inner ball. Hence it can be written as the graph of a radial function $\rho:S^1\to(0,\infty)$ over $S^1$ in the warped-product model of $\mathbb H^2$. For such a graph,
\begin{equation}
\nu=\frac{1}{\sqrt{1+\frac{\rho_\theta^2}{\sinh^2(\rho)}}}\left(-\partial_r+\frac{\rho_\theta}{\sinh^2(\rho)}\partial_\theta\right),
\end{equation}
\begin{equation}
\kappa=
\frac{\sinh^2(\rho)\cosh(\rho)+2\rho_\theta^2\cosh(\rho)-\rho_{\theta\theta}\sinh(\rho)}
{\left(\sinh^2(\rho)+\rho_\theta^2\right)^{3/2}},
\label{3.9}
\end{equation}
where $\rho_\theta$ and $\rho_{\theta\theta}$ denote the derivatives of $\rho(\theta,t)$ with respect to $\theta\in S^1$, and $\partial_\theta$ and $\partial_r$ denote the angular and radial vector fields, respectively. For each fixed $t_0\in[0,\infty)$, we write $\gamma_{t_0}$ as the graph of a radial function $\rho_{t_0}$ over $S^1$ centered at $p_{t_0}$. By continuity, for $t$ sufficiently close to $t_0$, the curve $\gamma_t$ is still star-shaped with respect to $p_{t_0}$ and hence can be expressed as a radial graph centered at $p_{t_0}$. It is well known that, up to a tangential diffeomorphism, the flow  \eqref{1.1} is equivalent to the following scalar parabolic PDE for the radial function $\rho(\cdot,t)$ on $S^1$ (see \cite{9}):
\begin{equation}
\left\{
\begin{aligned}
&\frac{\partial \rho}{\partial t} = -(\phi_i(t)-\kappa^\alpha)\sqrt{1+\frac{\rho_\theta^2}{\sinh^2(\rho)}}, \qquad t>t_0,\\
&\rho(u,t_0)=\rho_{t_0}(u),
\end{aligned}
\right.
\label{3.10}
\end{equation}
where $\kappa$ is given by \eqref{3.9}. If the maximal existence time of the solution satisfies $T<\infty$, we choose $t_0<T$ sufficiently close to $T$ so that $\gamma_t$ is star-shaped with respect to $p_{t_0}$ for all $t\in[t_0,T)$. The curvature estimates imply uniform $C^2$ estimates for $\rho$. We may then apply the H\"older estimate in one space dimension from \cite[\S7, Chapter XIV]{15} to obtain uniform $C^{2,\alpha}$ estimates for $\rho$ in both space and time. Higher-order regularity follows from parabolic Schauder theory \cite{12}.
\begin{lemma}
Along the area-preserving or length-preserving flow \eqref{1.1}, there exists a family of isometries $\phi_t:\mathbb H^2\to\mathbb H^2$  and a sequence of times $\{t_j\}$, with $t_j\to\infty$, such that $\phi_{t_j}(\gamma_{t_j})$ converges smoothly to a geodesic circle of radius $\rho_\infty$, where $\rho_\infty=\cosh^{-1}(\frac{A(0)}{2\pi}+1)$ if the flow \eqref{1.1} is area-preserving, and $\rho_\infty=\sinh^{-1}(\frac{L(0)}{2\pi})$
if the flow \eqref{1.1} is length-preserving.
\end{lemma}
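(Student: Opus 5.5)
Since we assume the flow exists for all time with uniform curvature bounds (no blow-up, and we will use $\underline{\kappa}_0\le\kappa\le C$ together with the higher-order estimates just described), the plan is a compactness argument combined with the monotone quantity $L(t)$ (area-preserving case) or $A(t)$ (length-preserving case) from Lemma \ref{lem:3.3}.

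\textbf{Step 1: integrability of the dissipation.} In the area-preserving case, \eqref{3.2} gives
\begin{equation*}
-\frac{dL}{dt}=\frac{1}{L}\Bigl(\int_{\gamma_t}\kappa\,ds\int_{\gamma_t}\kappa^\alpha\,ds-L\int_{\gamma_t}\kappa^{\alpha+1}\,ds\Bigr)=:\mathcal D(t)\ge 0 .
\end{equation*}
Since $L(t)>0$ is nonincreasing, $\int_0^\infty \mathcal D(t)\,dt\le L(0)<\infty$. In the length-preserving case we argue the same way with $A(t)$, using \eqref{3.6}. Here $A(t)$ is nondecreasing and bounded above, because $\rho_+\le c_2$ by Lemma \ref{lem:3.4}. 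The derivative $\mathcal D'(t)$ is bounded uniformly, thanks to the uniform $C^k$ bounds on $\kappa$ and its derivatives. Hence $\mathcal D$ is uniformly continuous, and we conclude $\mathcal D(t)\to0$ as $t\to\infty$.

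\textbf{Step 2: subsequential convergence.} By Lemma \ref{lem:3.4}, every $\Omega_t$ contains a ball $B_{c_1}(p_t)$ and has diameter at most $2c_2$. Let $\phi_t$ be the isometry sending $p_t$ to a fixed point $o$. Then $\phi_t(\gamma_t)$ are radial graphs over $o$ with radius confined to $[c_1,2c_2]$. By the $C^k$ estimates for \eqref{3.10}, these graphs have uniform $C^k$ bounds on the radial functions $\rho$. Arzel\`a–Ascoli then gives a sequence $t_j\to\infty$ with $\phi_{t_j}(\gamma_{t_j})\to\gamma_\infty$ smoothly, where $\gamma_\infty$ is smooth, closed and convex with $\kappa\ge\underline{\kappa}_0$.

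\textbf{Step 3: identifying the limit.} $\mathcal D$ is continuous under smooth convergence, so passing to the limit gives $\mathcal D(\gamma_\infty)=0$. The equality case of the H\"older inequalities \eqref{3.3}–\eqref{3.4} (equivalently Lemma \ref{lem:3.2} with $F(\xi)=\xi^{\alpha}$ applied appropriately, which is strictly monotone since $\alpha\neq0$) forces $\kappa$ to be constant on $\gamma_\infty$. A closed curve of constant curvature $>0$ in $\mathbb H^2$ enclosing a domain must have $\kappa>1$ and is a geodesic circle. Area and length pass to the smooth limit. In the area-preserving case $A=2\pi(\cosh\rho_\infty-1)=A(0)$, giving $\rho_\infty=\cosh^{-1}(A(0)/2\pi+1)$. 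In the length-preserving case $L=2\pi\sinh\rho_\infty=L(0)$, giving $\rho_\infty=\sinh^{-1}(L(0)/2\pi)$.

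\textbf{Main obstacle.} The delicate point is Step 1: passing from $\int_0^\infty\mathcal D<\infty$ to $\mathcal D(t_j)\to0$ along a sequence on which compactness also holds. As a fallback, integrability alone already yields a sequence $t_j$ with $\mathcal D(t_j)\to0$, since $\liminf_{t\to\infty}\mathcal D(t)=0$. We then extract a further convergent subsequence by Step 2, which suffices for the statement without any uniform continuity. This requires only that the higher-order bounds are uniform in $t$ on $[0,\infty)$, which is what the preceding regularity discussion provides under the no-blow-up hypothesis.
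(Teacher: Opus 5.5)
Your proposal is correct and follows essentially the same route as the paper. Monotonicity of $L$ (resp.\ $A$) gives an integrable dissipation that tends to zero. Recentring the inner balls by isometries, together with the uniform higher-order estimates, gives a smoothly convergent subsequence. The equality case of Lemma~\ref{lem:3.2} with $F(\xi)=\xi^\alpha$ then identifies the limit as a geodesic circle, whose radius is fixed by the preserved quantity. The deviations are minor: you bound $A(t)$ via $\rho_+\le c_2$ instead of the isoperimetric bound $A\le\sqrt{4\pi^2+L(0)^2}-2\pi$, and your $\liminf$ fallback removes the uniform-continuity step. Like the paper, however, you still need curvature and derivative bounds that are uniform on $[0,\infty)$, which is more than mere absence of finite-time blow-up.
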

\begin{proof}
For the area-preserving flow \eqref{1.1} with $\phi_1(t)$  given by \eqref{1.2},
by \eqref{3.5} the length $L(t)$ of $\gamma_t$ satisfies
\begin{equation}\label{}
  \frac{\partial L}{\partial t}=-\int_{\gamma_t} \kappa(\phi_1(t)-\kappa^{\alpha})ds \leq 0.
  \label{3.16}
\end{equation}
The isoperimetric inequality together with preservation of the area implies that
\begin{equation*}
  L(0)\geq L(t)\geq L(\infty) \geq\left(A^{2}(\infty)+4\pi A(\infty)\right)^{1/2}
=\sqrt{A^{2}(0)+4\pi A(0)}.
\end{equation*}
Then the $L^1$ integral of $L^{\prime}(t)$ satisfies
\begin{equation*}
  \int_{0}^{\infty}|L^{\prime}(t)|dt=\int_{0}^{\infty}(-L^{\prime}(t))dt
  =L(0)-L(\infty)<\infty.
\end{equation*}
The uniform bounds of all space-time derivatives of $\kappa(\cdot,t)$ imply that
$L^{\prime}(t)$ is uniformly continuous with respect to $t$.
It follows from the above estimate that $L^{\prime}(t)\to 0$ as $t\to\infty$.
\par
Similarly, for the length-preserving flow \eqref{1.1} with $\phi_2(t)$ defined by \eqref{1.3}, it follows from \eqref{3.6} that the area $A(t)$ of the enclosed domain $\Omega_t$ satisfies
\begin{equation}\label{}
  \frac{\partial A}{\partial t}=\frac{1}{\int_{\gamma_t} \kappa ds}(\int_{\gamma_t} ds \int_{\gamma_t} \kappa^{\alpha+1}ds-\int_{\gamma_t} \kappa ds \int_{\gamma_t} \kappa^{\alpha}ds)\geq0.
  \label{3.17}
\end{equation}
The isoperimetric inequality together with preservation of the length implies that
\begin{equation*}
  A(0)\leq A(t) \leq A(\infty)\leq\sqrt{4\pi^{2}+L(\infty)^{2}}-2\pi
=\sqrt{4\pi^{2}+L(0)^{2}}-2\pi.
\end{equation*}
It follows that the $L^1$ integral of $A^{\prime}(t)$ is finite and $A^{\prime}(t)\to 0$
as $t\to\infty$.
\par
Let $p_t$ be the center of an inner ball of $\Omega_t$, and let
$\varphi_t:\mathbb{H}^2\to\mathbb{H}^2$ be an isometry carrying $p_t$ to $p_0$.
Clearly, each $\tilde{\gamma}_t=\varphi_t(\gamma_t)$ is a convex curve with a center
of an inner ball at the fixed point $p_0$ and inner radius $\rho_{-}(t)\geq c_1$.
The regularity estimate on $\gamma_t$ carries directly to the regularity estimate
of $\tilde{\gamma}_t$.
This implies that there exists a sequence of times $\{t_j\},t_j\to\infty$,
such that $\tilde{\gamma}_{t_j}$ converges smoothly to a limit $\tilde{\gamma}_{\infty}$.
Since the area, length and curvature of $\gamma_t$ are invariant under $\varphi_t$,
using $L^{\prime}(t)\to 0$ or $A^{\prime}(t)\to 0$ as $t\to\infty$,
we have
\begin{equation}\label{}
  L(\infty)\int_{\widetilde{\gamma}_{\infty}} \kappa^{\alpha+1}ds-\int_{\widetilde{\gamma}_{\infty}} \kappa ds \int_{\widetilde{\gamma}_{\infty}} \kappa^{\alpha}ds=0
  \label{3.18}
\end{equation}
by \eqref{3.16} and \eqref{3.17}.
Applying Lemma \ref{lem:3.2} to \eqref{3.18} with $\xi=\kappa$ and $F(\xi)=\xi^\alpha$, we conclude that $\tilde{\gamma}_\infty$ has constant curvature and hence $\tilde{\gamma}_\infty$ is a geodesic circle of some radius $\rho_\infty$.
The radius $\rho_{\infty}$ can be determined uniquely by the initial curve $\gamma_0$.
If the global term $\phi_1(t)$ is chosen  to preserve the enclosed area $A(t)$,
we have
\begin{equation*}
  A(0)=A(\infty)=2\pi\int_{0}^{\rho_{\infty}}\sinh rdr=2\pi\left(\cosh \rho_{\infty}-1\right).
\end{equation*}
This implies that $\rho_{\infty}=\cosh^{-1}(\frac{A(0)}{2\pi}+1)$.
Similarly, if the flow is length preserving, we have
\begin{equation*}
  L(0)=L(\infty)=2\pi\sinh\rho_{\infty},
\end{equation*}
from which we have $\rho_{\infty}=\sinh^{-1}(\frac{L(0)}{2\pi})$.
\end{proof}

The exponential convergence is obtained by studying the linearization of the flow \eqref{1.1}. The argument is similar to that in Section 6 of \cite{5}.
For each sufficiently large time $t_k$, we write $\gamma_{t_k}$ as the graph of a radial function $\rho_{t_k}$ over $S^1$, centered at $p_{t_k}$.
For $t$ sufficiently close to $t_k$, we rewrite the flow \eqref{1.1} as the following scalar parabolic equation:
\begin{equation}
\left\{
\begin{aligned}
&\frac{\partial \rho}{\partial t} = -( \phi_i(t) - \kappa^{\alpha}) \sqrt{1+\frac{\rho_{\theta}^2}{\sinh^2(\rho)}},  ~~~~t>t_k\\
&\rho(u,t_k) = \rho_{t_k}(u),
\end{aligned}
\right.
\label{3.19}
\end{equation}
where $\kappa$ is given in terms of $\rho$ by \eqref{3.9}.
Note that we can assume the oscillation of $\rho_{t_k}-\rho_{\infty}$ is sufficiently small
by choosing $t_k$ large enough.
\begin{lemma}
The linearization of \eqref{3.19} about the geodesic circle of radius $\rho_{\infty}$ is
\begin{equation}\label{}
  \frac{\partial \eta}{\partial t}=-\frac{\alpha \tanh^{-\alpha-1}\rho_{\infty}}{\cosh^2\rho_{\infty}}
  (\eta_{\theta\theta}+\eta-\frac{1}{2\pi}\int_0^{2\pi}\eta d\theta).
  \label{3.20}
\end{equation}
\end{lemma}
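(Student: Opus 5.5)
The plan is to substitute $\rho(\theta,t)=\rho_\infty+\varepsilon\,\eta(\theta,t)$ into \eqref{3.19}, differentiate in $\varepsilon$ at $\varepsilon=0$, and keep only first-order terms. The key observation is that on the geodesic circle the curvature is constant, $\kappa_0=\coth\rho_\infty$. Hence $\phi_i-\kappa^\alpha$ vanishes at zeroth order for both $i=1,2$. The factor $\sqrt{1+\rho_\theta^2/\sinh^2\rho}=1+O(\varepsilon^2)$ therefore contributes nothing, and we get
\begin{equation*}
\partial_t\eta=\delta(\kappa^\alpha)-\delta\phi_i ,
\end{equation*}
where $\delta$ denotes the first variation. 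So it suffices to compute $\delta\kappa$ and $\delta\phi_i$.

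For $\delta\kappa$ I use \eqref{3.9}. The terms $2\rho_\theta^2\cosh\rho$ in the numerator and $\rho_\theta^2$ in the denominator are $O(\varepsilon^2)$. To first order this leaves
\begin{equation*}
\kappa\approx\frac{\sinh^2\rho\cosh\rho-\rho_{\theta\theta}\sinh\rho}{\sinh^3\rho}=\coth\rho-\frac{\rho_{\theta\theta}}{\sinh^2\rho}.
\end{equation*}
Since $(\coth)'=-1/\sinh^2$, this gives $\delta\kappa=-(\eta_{\theta\theta}+\eta)/\sinh^2\rho_\infty$. Consequently
\begin{equation*}
\delta(\kappa^\alpha)=\alpha\coth^{\alpha-1}\rho_\infty\,\delta\kappa=-\frac{\alpha\tanh^{-\alpha-1}\rho_\infty}{\cosh^2\rho_\infty}(\eta_{\theta\theta}+\eta),
\end{equation*}
using the identity $\coth^{\alpha-1}\rho/\sinh^2\rho=\tanh^{1-\alpha}\rho/\sinh^2\rho=\tanh^{-\alpha-1}\rho/\cosh^2\rho$.

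The step needing the most care is $\delta\phi_i$, since the length element $ds=\sqrt{\sinh^2\rho+\rho_\theta^2}\,d\theta$ and the total length $L$ also vary. I will show that for both choices $\delta\phi_i$ equals the average of $\delta(\kappa^\alpha)$ over the unperturbed circle, where $ds_0=\sinh\rho_\infty\,d\theta$ and $L_0=2\pi\sinh\rho_\infty$.

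For $\phi_1=\int\kappa^\alpha ds/L$, write $\delta\phi_1=\frac{1}{L_0}\int\delta(\kappa^\alpha)ds_0+\kappa_0^\alpha\frac{\delta L}{L_0}-\kappa_0^\alpha\frac{\delta L}{L_0}$. The $\delta L$ terms cancel because $\kappa_0$ is constant.

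For $\phi_2=P/Q$ with $P=\int\kappa^{\alpha+1}ds$ and $Q=\int\kappa\,ds$, expand $\delta\phi_2=\delta P/Q_0-P_0\delta Q/Q_0^2$. Here
\begin{equation*}
\delta P=(\alpha+1)\kappa_0^\alpha\int\delta\kappa\,ds_0+\kappa_0^{\alpha+1}\delta L,\qquad \delta Q=\int\delta\kappa\,ds_0+\kappa_0\delta L,
\end{equation*}
with $Q_0=\kappa_0L_0$ and $P_0=\kappa_0^{\alpha+1}L_0$. The $\delta L$ terms again cancel, leaving $\delta\phi_2=\alpha\kappa_0^{\alpha-1}\frac{1}{L_0}\int\delta\kappa\,ds_0$, which is the average of $\delta(\kappa^\alpha)$.

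Finally, since $\int_0^{2\pi}\eta_{\theta\theta}\,d\theta=0$, this average is
\begin{equation*}
-\frac{\alpha\tanh^{-\alpha-1}\rho_\infty}{\cosh^2\rho_\infty}\cdot\frac{1}{2\pi}\int_0^{2\pi}\eta\,d\theta .
\end{equation*}
Substituting into $\partial_t\eta=\delta(\kappa^\alpha)-\delta\phi_i$ yields exactly \eqref{3.20}. The only genuine obstacle is the bookkeeping of the variation of the measure in $\phi_i$; everything else is a direct Taylor expansion.
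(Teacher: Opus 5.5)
Your proposal is correct and follows the same route as the paper: a direct first-variation computation of $\kappa$, of the speed factor $\sqrt{1+\rho_\theta^2/\sinh^2\rho}$, and of $\phi_i$ about the circle, using that $\phi_i-\kappa^\alpha$ vanishes there. The differences are cosmetic. You perturb additively, $\rho=\rho_\infty+\varepsilon\eta$, rather than $\rho=\rho_\infty(1+\varepsilon\eta)$, and the factor $\rho_\infty$ drops out of the linear equation; you also carry out the $\phi_2$ case explicitly via the cancellation of the $\delta L$ terms, which the paper only asserts with ``similarly''.
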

\begin{proof}
We write $\rho=\rho_{\infty}(1+\varepsilon\eta)$ for $\varepsilon$ sufficiently small. A direct computation yields
\begin{equation}\label{}
  \frac{\partial}{\partial \varepsilon}\bigg{|}_{\varepsilon=0}\kappa
=-\frac{\rho_{\infty}}{\sinh^{2}\left(\rho_{\infty}\right)}
(\eta_{\theta\theta}+\eta),
\label{3.21}
\end{equation}
\begin{equation}\label{}
  \frac{\partial}{\partial \varepsilon}\bigg{|}_{\varepsilon=0}
\sqrt{1+\rho_{\theta}^{2}/\sinh^{2}(\rho)}=0,
\label{3.22}
\end{equation}
\begin{equation}\label{}
  \frac{\partial}{\partial \varepsilon}\bigg{|}_{\varepsilon=0}
\sqrt{\rho_{\theta}^{2}+\sinh^{2}(\rho)}=\rho_{\infty}\cosh(\rho_{\infty})\eta.
\label{3.23}
\end{equation}
Note that, in the graphical representation, the global term $\phi_1(t)$ defined by \eqref{1.2} can be rewritten as
\begin{equation*}
  \phi_1(t)=\frac{\int_{\mathbb{S}^{1}}
\kappa^{\alpha}\sqrt{\rho_{\theta}^{2}+\sinh^{2}\left(\rho\right)}d\theta}
{\int_{\mathbb{S}^{1}}\sqrt{\rho_{\theta}^{2}+\sinh^{2}\left(\rho\right)}d\theta}.
\end{equation*}
We compute that
\begin{equation}\label{}
  \frac{d}{d\varepsilon}\bigg{|}_{\varepsilon=0}\phi_1
=-\frac{\alpha\rho_{\infty}\tanh^{-\alpha-1}(\rho_{\infty})}
{2\pi\cosh^{2}\left(\rho_{\infty}\right)}\int\limits_{\mathbb{S}^{1}}\eta d\theta.
\label{3.24}
\end{equation}
Similarly, \eqref{3.24} also holds with $\phi_2(t)$ in place of $\phi_1(t)$. Combining \eqref{3.21}--\eqref{3.24}, we obtain \eqref{3.20}. This completes the proof.
\end{proof}

Since the oscillation of $\rho_{t_k}-\rho_\infty$ is sufficiently small, the stability argument in \cite{5}, together with the result of \cite{6}, implies that the solution of \eqref{3.19} with initial data $\rho_{t_k}$ exists for all time and converges exponentially to the constant $\rho_\infty$.
This means that the curve $\overline{\gamma_t}=$ graph $\rho(\cdot,t)$ solves \eqref{1.1}
with initial condition $\gamma_{t_k}$ and $\overline{\gamma_t}$ converges exponentially to
a geodesic circle of radius $\rho_{\infty}$.
By uniqueness of solutions, the solution constructed from the initial curve $\gamma_{t_k}$ coincides with the original solution for  $t\geq t_k$, and hence the solution of \eqref{1.1} with initial condition $\gamma_0$ converges exponentially to the geodesic circle of radius $\rho_\infty$ without applying an ambient isometry.
This completes the proof of Theorem 1.1.
	%%%%%%%%%%%%%%%%%%%%%%%%%%%%%%%%%%%%%%%%%%%%%
	\section{A Sufficient Condition for Global Existence}\label{sect:4}
	%%%%%%%%%%%%%%%%%%%%%%%%%%%%%%%%%%%%%%%%%%%%%
In this section, we establish a sufficient condition on the initial curve $\gamma_0$ that guarantees global existence of the convex solution to \eqref{1.1}.
\begin{lemma}\label{lem:4.1}
Let  $V=\frac{1}{\kappa},$ and define
$\eta(t):=\gamma_t\cap\{\phi_i(t)\ge V^{-\alpha}\}\cap\left\{V^2<\frac32\right\}.$
Assume that for  $t\in[0,\infty)$ there exists $s(t)\in S^1$ such that
$V(s(t),t)=D,$ where
\begin{equation*}
   D=\cosh^{-1}(\frac{A(0)}{2\pi}+1)
~or~
D=\sinh^{-1}(\frac{L(0)}{2\pi}),
 \end{equation*}
according to whether the flow is area-preserving or length-preserving. Then there exists a positive constant $C$, depending only on $\alpha$ and $\gamma_0$, such that
\begin{equation}
0 < \kappa \leq C, \quad \forall (s, t) \in S^1 \times [0, \infty),
\label{4.1}
\end{equation}
or equivalently
\begin{equation}
V \geq C^{-1}, \quad \forall (s, t) \in S^1 \times [0, \infty).
\label{4.2}
\end{equation}
\end{lemma}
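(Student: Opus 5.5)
The plan is a Sturm-type comparison in the arclength variable. The hypothesis gives, at every time, a point $s(t)$ where $V=D$, which anchors the values of $V$ along $\gamma_t$. A uniform integral bound on the derivative of a suitable power of $\kappa$ should then propagate from that anchor to a pointwise bound along the whole curve. Concretely, for an exponent $q>0$ to be fixed later (a natural first choice is $q=1-\alpha$ or $q=-\alpha$, so that the top-order term of the evolution has a clean divergence form), set $w=\kappa^{q}=V^{-q}$. For every $s$,
\begin{equation*}
w(s,t)\le D^{-q}+\int_{\gamma_t}|w_s|\,ds\le D^{-q}+\sqrt{L(t)}\Bigl(\int_{\gamma_t} w_s^2\,ds\Bigr)^{1/2}.
\end{equation*}
By Lemma \ref{lem:3.3} and its proof, $L(t)\le L(0)$ in both cases. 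Hence \eqref{4.1} reduces to a uniform bound on $E(t)=\int_{\gamma_t}w_s^2\,ds$, and \eqref{4.2} is the same statement rewritten in terms of $V$.

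To bound $E$, I will first derive the evolution of $V$ from \eqref{2.6}:
\begin{equation*}
V_t=V^2\,(V^{-\alpha})_{ss}-(1-V^2)(\phi_i-V^{-\alpha}).
\end{equation*}
I will combine this with $\partial_t ds=-(\phi_i-\kappa^\alpha)\kappa\,ds$ and the commutator $[\partial_t,\partial_s]=(\phi_i-\kappa^\alpha)\kappa\,\partial_s$ to compute $\frac{d}{dt}E$. After integrating by parts, the leading term should be $-c\int \kappa^{m}(w_{ss})^2\,ds\le 0$ for some power $m$; this uses $\alpha<0$, since the linearised coefficient $-\alpha\kappa^{\alpha-1}$ is positive. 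The remaining terms are lower order, each a product of powers of $\kappa$ with $w_s^2$, multiplied by either $(\phi_i-\kappa^\alpha)$ or $(\kappa^2-1)$.

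I also need bounds on $\phi_i$. By Lemma \ref{lem:3.4} and Gauss--Bonnet, $\int\kappa\,ds=2\pi+A$ and $L$ are pinched between positive constants. Since $\alpha<0$, H\"older's inequality as in \eqref{3.3} gives $\phi_i\ge c(\gamma_0)>0$. For the upper bound on $\phi_i$ I would use the lower curvature bound $\kappa\ge\underline{\kappa}_0$ from Lemma \ref{lem:3.1}, which gives $\kappa^\alpha\le\underline{\kappa}_0^{\alpha}$ and hence $\phi_i\le\underline{\kappa}_0^{\alpha}$.

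The sign analysis uses the decomposition of $\gamma_t$ into $\eta(t)$ and its complement. Where $\phi_i<V^{-\alpha}$, that is $\phi_i<\kappa^\alpha$, the flow pushes $\kappa$ down, and the reaction term $(\kappa^2-1)(\phi_i-\kappa^\alpha)$ has a favourable sign whenever $\kappa>1$. Where $V^2\ge 3/2$, the curvature is bounded by $\sqrt{2/3}$, so all coefficients there are bounded. Thus the only dangerous contributions come from $\eta(t)$, where $\kappa>\sqrt{2/3}$ and $\kappa^\alpha\le\phi_i$, that is, $\kappa\ge\phi_i^{1/\alpha}$.

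There I intend to absorb the bad terms into the good term $-\int\kappa^{m}w_{ss}^2$. For this I would use an interpolation inequality on $\gamma_t$, namely $\sup|w_s|^2\le C\,E^{1/2}\bigl(\int w_{ss}^2\bigr)^{1/2}+CE$, together with the anchoring at $s(t)$ to bound $\sup w$ by $1+E^{1/2}$. The target is a differential inequality of the form $E'\le C_1-C_2E$, or at least $E'\le C$ whenever $E$ is large, which yields $\sup_t E(t)\le\max\{E(0),C\}$.

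The hardest step will be the absorption on $\eta(t)$. The powers of $\kappa$ in the reaction terms may exceed what the interpolation can control, and the choice of $q$, together with the specific threshold $3/2$, presumably has to be tuned so that the coefficient produced there, something like $2V^2-3$ or $3-2V^2$, has the right sign. If the direct energy estimate fails, my fallback is to apply the maximum principle to a quantity of the form $\kappa^{q}\bigl(1+\varepsilon(\kappa^{q})_s^2\bigr)$ or to $w_s^2+\lambda w^2$. The anchor $V(s(t),t)=D$ would again convert a gradient bound into the pointwise bound \eqref{4.1}.
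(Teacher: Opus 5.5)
There is a genuine gap, at the step you yourself flag. The whole argument needs a bound on $E(t)=\int_{\gamma_t}w_s^2\,ds$ that is uniform in $t$. You do not derive this bound, and it cannot be obtained along the lines you sketch.

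If you carry out the computation, the top-order part of $E'$ is the dissipation $2\alpha\int\kappa^{\alpha-1}w_{ss}^2\,ds$, whose weight degenerates as $\kappa\to\infty$. The reaction term $(\kappa^2-1)(\phi_i-\kappa^\alpha)$, together with $\partial_t ds$ and the commutator, contributes $(2q+3)\int\kappa(\phi_i-\kappa^\alpha)w_s^2\,ds$, plus terms with smaller weights. Since $\phi_i$ is a weighted average of $\kappa^\alpha$ and $\alpha<0$, you always have $\phi_i\ge\kappa^\alpha$ at the maximum point of $\kappa$. So this term is positive exactly where the danger is, and that point lies in $\eta(t)$ once $\kappa_{\max}^2>2/3$.

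Take an arc where $\kappa\sim K$ with turning angle of order one. There $\int\kappa w_s^2\,ds\sim K^{2q+2}$, whereas $\int\kappa^{\alpha-1}w_{ss}^2\,ds\sim K^{2q+2+\alpha}$. The bad term wins by a factor $K^{-\alpha}$, so no interpolation inequality can absorb it. Your anchored bound $\sup\kappa^q\le D^{-q}+\sqrt{L(0)E}$ only turns this into something like $E'\le C(1+E)^{1+1/(2q)}$, which allows $E$ to blow up in finite time. This is the Riccati mechanism $\kappa_t\approx\phi_i\kappa^2$ behind the blow-up examples mentioned in the introduction.

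Moreover, by the intermediate value theorem the hypothesis $V(s(t),t)=D$ holds automatically whenever $V_{\min}(t)<D<V_{\max}(t)$. That is what one expects near a typical blow-up: a sharpening corner next to flatter arcs. So you should not expect an estimate valid on every $[0,T)$ that uses the hypothesis only to anchor a gradient bound at each time. This applies to your main plan and to both maximum-principle fallbacks.

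The missing idea is to settle for a space--time bound and to recover pointwise control only at selected times. The paper computes
\[
\frac{d}{dt}\int_{\eta(t)}V^2\,ds=6\alpha\int_{\eta(t)}V^{1-\alpha}V_s^2\,ds+2\int_{\eta(t)}V\bigl(V^2-\tfrac32\bigr)\bigl(\phi_i-V^{-\alpha}\bigr)\,ds .
\]
This is where $3/2$ comes from: the weight $V^2$ combined with $\partial_t ds$ produces the factor $2V^3-3V$, so the reaction term is nonpositive on $\eta(t)$. Integrating in time gives $\int_0^\infty f(t)\,dt<\infty$ for $f(t)=\int_{\eta(t)}V^{1-\alpha}V_s^2\,ds$, with no need to beat the reaction term at any single time.

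Consequently, for $n\ge N_0$ each window $[nb,(n+1)b]$ contains a time $t_n$ with $f(t_n)<\varepsilon$. At $t_n$, the anchor together with H\"older applied to $V^{(1-\alpha)/2}$ gives $V_{\min}(t_n)\gtrsim D$.

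Between such times, apply the maximum principle at a minimum point of $V$, where $\phi_i\ge V^{-\alpha}$ and $(V^{-\alpha})_{ss}\ge0$. This gives $\frac{d}{dt}V_{\min}\ge-\phi_i\ge-\underline{\kappa}_0^{\alpha}$, which uses exactly your upper bound on $\phi_i$. Hence $V_{\min}$ drops by at most $2b\,\underline{\kappa}_0^{\alpha}$ before the next good time, and $b$ is chosen to keep this below a fixed fraction of $D$.

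The remaining interval $[0,(N_0+1)b]$ is covered by continuity, using that the solution is assumed to exist on $[0,\infty)$. This is an intrinsically large-time argument, and that is how it avoids the obstruction described above.
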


 \begin{proof}
By the evolution equation \eqref{2.6} for the curvature $\kappa$, the function $V$ satisfies
\begin{equation*}
\frac{\partial}{\partial t } V
=
V^2 \frac{\partial^2}{\partial s^2} V^{-\alpha}
-(1-V^2)(\phi_i-V^{-\alpha}).
\end{equation*}
Firstly, we consider the case in which $\phi_i(t)<V^{-\alpha}=\kappa^\alpha$.
\par
Define $W:=\kappa+\frac{1}{\kappa}.$
Then, by a direct computation, one obtains
  \begin{equation*}
    \frac{\partial}{\partial s} W=(1-\kappa^{-2})\frac{\partial}{\partial s}\kappa,
  \end{equation*}
  and
  \begin{equation*}
    \frac{\partial^2}{\partial s^2} W=(1-\kappa^{-2})\frac{\partial^2}{\partial s^2}\kappa+2\kappa^{-3}(\frac{\partial}{\partial s} \kappa)^2.
  \end{equation*}
Combining the above identities, we obtain the following evolution equation for $W$.
 \begin{equation}
   \begin{split}
  \frac{\partial }{\partial t}W
 =&\  (1-\kappa^{-2})\frac{\partial }{\partial t}\kappa
 \\
 =&\  -\alpha \kappa^{\alpha-1}(1-\kappa^{-2})\frac{\partial^2}{\partial s^2} \kappa
 +\alpha(-\alpha+1)\kappa^{\alpha-2}(1-\kappa^{-2})(\frac{\partial}{\partial s} \kappa)^2
 \\
 &\
 +(\kappa^2-1)(1-\kappa^{-2})(\phi_i-\kappa^{\alpha})
 \\
 =&\  -\alpha \kappa^{\alpha-1}\frac{\partial^2}{\partial s^2} W+\alpha(-\alpha+1)\kappa^{\alpha-2} \frac{\partial}{\partial s}\kappa \frac{\partial}{\partial s} W+2\alpha \kappa^{\alpha-4}(\frac{\partial}{\partial s}\kappa)^2
 \\
 &\
 +\frac{(\kappa^2-1)^2}{\kappa^2}(\phi_i-\kappa^{\alpha}).
 \label{4.3}
 \end{split}
 \end{equation}\par
Applying the maximum principle to \eqref{4.3}, we conclude that $W_{\max}(t)$ is non-increasing in time. Therefore,
\begin{equation*}
  \kappa_{\max}(t)<\kappa_{\max}(t)+\frac{1}{\kappa_{\max}(t)}\leq W_{\max}(t)
\leq W_{\max}(0)=\max_{\gamma_0}\left(\kappa+\frac{1}{\kappa}\right).
\end{equation*}
\par
Secondly, consider the case $\phi_i (t)\geq V^{-\alpha}$.
The condition $V^{2}\geq \frac{3}{2}$ yields a lower bound for $V$;
thus, we only need to analyze the case $V^{2}< \frac{3}{2}$.
\begin{equation}
   \begin{split}
  \frac{\partial}{\partial t}\int_{\eta(t)}V^2 ds
 =&\  2\int_{\eta(t)}V V_t ds + \int_{\eta(t)}V^2 (ds)_t
 \\
 =&\  2\int_{\eta(t)}V^3 (V^{-\alpha})_{ss}ds + 2\int_{\eta(t)}V^3 (\phi_i -V^{-\alpha})ds
 - 3\int_{\eta(t)}V(\phi_i -V^{-\alpha})ds
 \\
 =&\ 6\alpha \int_{\eta(t)}V^{-\alpha +1}V_s^2 ds+ 2\int_{\eta(t)}V(V^2 -\frac{3}{2})(\phi_i -V^{-\alpha})ds.
 \label{4.4}
 \end{split}
 \end{equation}
From Equation \eqref{4.4} we have
\begin{equation}\label{}
  \frac{\partial}{\partial t}\int_{\eta(t)}V^2 ds\leq 6\alpha \int_{\eta(t)}V^{-\alpha +1}V_s^2 ds,
\end{equation}
\begin{equation}
   \begin{split}
  -\alpha \int_0^t\int_{\eta(t)}V^{-\alpha +1}V_s^2 ds\leq
&\  -\frac{1}{6}\int_{\eta(t)}V^2 (t)ds + \frac{1}{6}\int_{\eta(t)} V^2 (0)ds
 \\
 \leq&\  \frac{1}{6}\int_{\eta(t)}V^2 (0)ds.
 \end{split}
 \end{equation}
Letting
\begin{equation*}
  f(t) = \int_{\eta(t)}V^{-\alpha +1}V_{s}^{2}ds,\ \ \ \ b = \left(\frac{8}{\underline{\kappa}_0^{2}}\right)^{- 1}D,
\end{equation*}
since $\displaystyle\int_0^\infty f(t)dt$ converges, for any given $\varepsilon >0$,
there are only finitely many $n\in \mathbb{N}$ such that $f(t)\geq \varepsilon$ holds
for  $t\in [nb,(n + 1)b]$.
Then there exists a natural number $N_0 > 0$ such that as long as $n\geq N_0$,
$f$ is less than $\varepsilon$ at some point $t_n\in [nb,(n + 1)b]$, i.e.,
\begin{equation}\label{}
  0\leq f(t_n)< \varepsilon, \quad t_n\in [nb,(n + 1)b],\quad n\geq N_0,
\end{equation}
where $\varepsilon >0$ is a number to be chosen suitably later.
Then, under the assumption that $\eta(t)$ we obtain $V(s_{n},t_{n}) = D$,
\begin{equation}
   \begin{split}
  V_{\min}^{\frac{-\alpha + 1}{2}}(t_n) - D^{\frac{-\alpha + 1}{2}}
 =&\ V^{\frac{-\alpha + 1}{2}}(s_*,t_n) - V^{\frac{-\alpha + 1}{2}}(s_n,t_n)
 \\
 =&\  \int_{s_n}^{s_*} \frac{-\alpha + 1}{2} V^{\frac{-\alpha + 1}{2}}(s,t_n)V_s(s,t_n)ds
 \\
\geq &\ -\frac{-\alpha + 1}{2}\sqrt{L}\sqrt{f(t_n)},
\label{4.8}
 \end{split}
 \end{equation}
where we have used the H\"older inequality in \eqref{4.8}. The above inequality implies
\begin{equation}
   \begin{split}
  V_{\min}^{\frac{-\alpha + 1}{2}}(t_n)\geq D^{\frac{-\alpha + 1}{2}} - \frac{-\alpha + 1}{2}\sqrt{L}\sqrt{f(t_n)}\geq D^{\frac{-\alpha + 1}{2}} - \frac{-\alpha + 1}{2}\sqrt{L(0) \varepsilon}
 \end{split}
 \end{equation}
and if we choose $\varepsilon >0$ small enough (the choice depends only on $\alpha$
and $X_0$ ), we get
\begin{equation}\label{}
  V_{\min}^{\frac{-\alpha + 1}{2}}(t_n)\geq \frac{1}{2} D^{\frac{-\alpha + 1}{2}},\quad t_n\in [nb,(n + 1)b],\quad n\geq N_0.
  \label{4.10}
\end{equation}
\par
Now we focus on the time interval $t\in [(N_0 + 1)b,\infty)$ and look at the minimum
function $V_{\min}(t)$ on $[(N_0 + 1)b,\infty)$.
Firstly, for  $t\in[(N_0+1)b,\infty)$, one can find an integer $n\geq N_0$ and a point $t_n\in[nb,(n+1)b]$ such that $0<t-t_n\leq 2b$ and \eqref{4.10} holds for $V_{\min}(t_n)$.
The idea is to compare $V_{\min}(t)$ and $V_{\min}(t_n)$ for $t\in [(N_0 + 1)b,\infty)$.
Let $s(t)\in S^1$ be such that $V(s(t),t) = V_{\min}(t)$, $t\in [0,\infty)$.
By the evolution equation of $V(s,t)$, at the minimum point $s(t)$,
we have
\begin{equation}\label{}
  \frac{\partial V}{\partial t}(t)\geq -(\phi_i -V^{-\alpha}) + V^{2}(\phi_i -V^{-\alpha})\geq -\underline{\kappa}_0^{\alpha},~~\forall~ t\in [0,\infty).
\end{equation}
Hence the maximum principle implies that the Lipschitz continuous function
$V_{\min}(t) = V(s(t),t)$,
on the time interval $[t_n,t]$, satisfies the estimate
\begin{equation}\label{}
  V_{\min}(t) - V_{\min}(t_n)\geq -\underline{\kappa}_0^{\alpha}(t - t_n)\geq -2b\underline{\kappa}_0^{\alpha},
\end{equation}
where now $t\in [(N_0 + 1)b,\infty)$ and $t_n\in [nb,(n + 1)b]$, $n\geq N_0$.
Since $V_{\min}(t_n)$ satisfies \eqref{4.10}, the above inequality implies
\begin{equation}\label{}
  V_{\min}(t) \ge \frac{1}{2}D - 2b\underline{\kappa}_0^{\alpha}, \quad \forall t \in [(N_0+1)b, \infty)
\end{equation}
and by the choice of $b=\left(\frac{8}{\underline{\kappa}_0^{2}}\right)^{-1}D$, we deduce the lower bound estimate.
\begin{equation}\label{}
  V_{\min}(t)\geq \frac{1}{4} D,\quad \forall~ t\in [(N_0 + 1)b,\infty).
\end{equation}
The continuity of $V$ implies that there exists a positive constant $C$, depending only on $\alpha$ and $\gamma_0$, such that $V_{\min}(t)\geq C$ for  $t\in[0,\infty).$
This proves \eqref{4.2}, and hence also \eqref{4.1}. The proof is complete.
\end{proof}
	\section{Example}
In this section, we present an explicit example for which the curvature remains uniformly bounded for all time along the flow.
\begin{example}
Let $\gamma_0$ be a convex curve with radial function
$\rho_0(\theta)=2+\cos\theta,  \theta\in[0,2\pi].$
Then $1\leq \rho_0(\theta)\leq 3.$
It follows from \eqref{3.9} that the curvature $\kappa$ satisfies
 \begin{equation*}
   \kappa= \frac{\sinh^2(\rho) \cosh(\rho) + 2\sin^2(\theta) \cosh(\rho) +\cos(\theta)\sinh(\rho)}{\left( \sinh^2(\rho) + \sin^2(\theta) \right)^{\frac{3}{2}}}.
 \end{equation*}
A direct calculation shows that
\begin{equation*}
0<\kappa\leq \frac{\sinh^2(\rho)\cosh(\rho)+2\cosh(\rho)+(\rho-2)\sinh(\rho)}
{\bigl[\cosh^2(\rho)-(\rho-2)^2\bigr]^{3/2}}.
\end{equation*}
Since $\rho$ is uniformly bounded, both $\sinh(\rho)$ and $\cosh(\rho)$ remain uniformly bounded. Hence $\kappa$ is uniformly bounded from above. Therefore, this example satisfies the bounded-curvature assumption appearing in Theorem~1.1. A schematic illustration is shown in Figure~1.
\end{example}
	
\begin{figure}[htbp]
    \centering
    \begin{subfigure}[b]{0.8\textwidth}   % 设置子图宽度（例如页面宽度的80%）
        \centering
        \includegraphics[width=\textwidth]{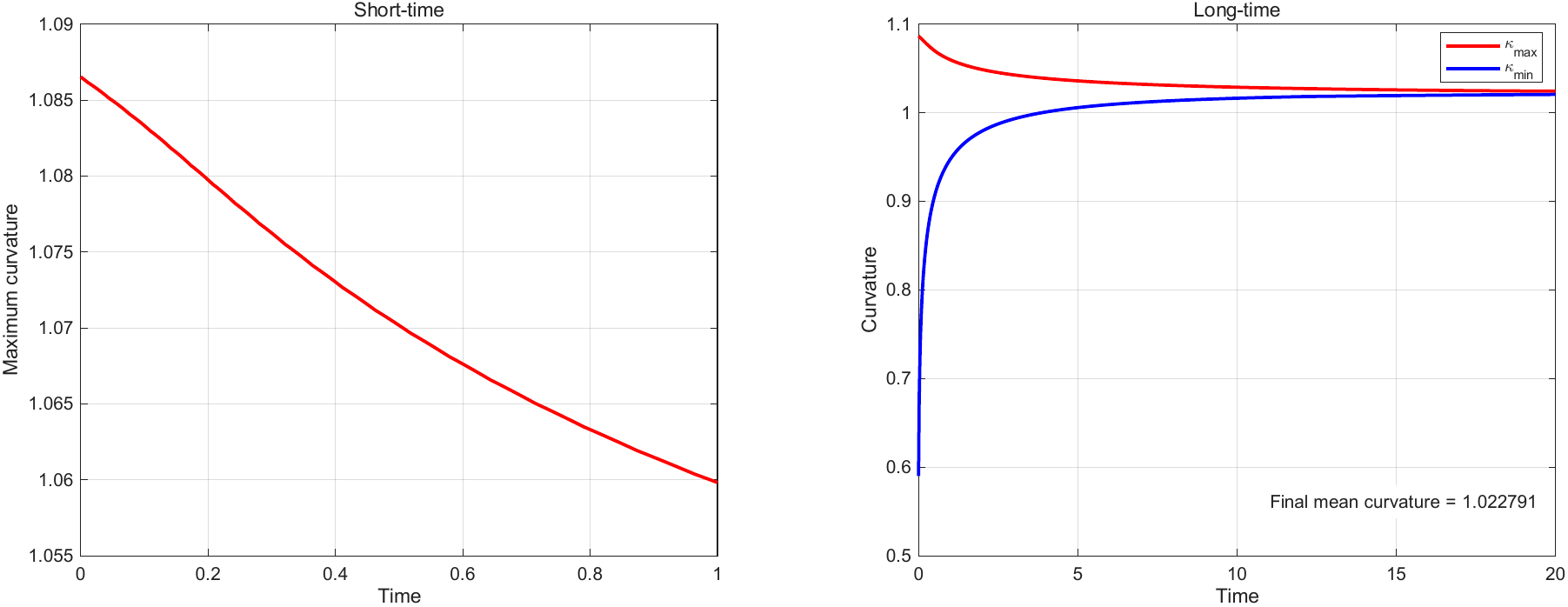}   % 第一张图片
        \caption{area-preserving for $\alpha$=-1}
        \label{fig:top}
    \end{subfigure}

    \vspace{1em}  % 上下两张图之间的垂直间距，可调整

    \begin{subfigure}[b]{0.8\textwidth}
        \centering
        \includegraphics[width=\textwidth]{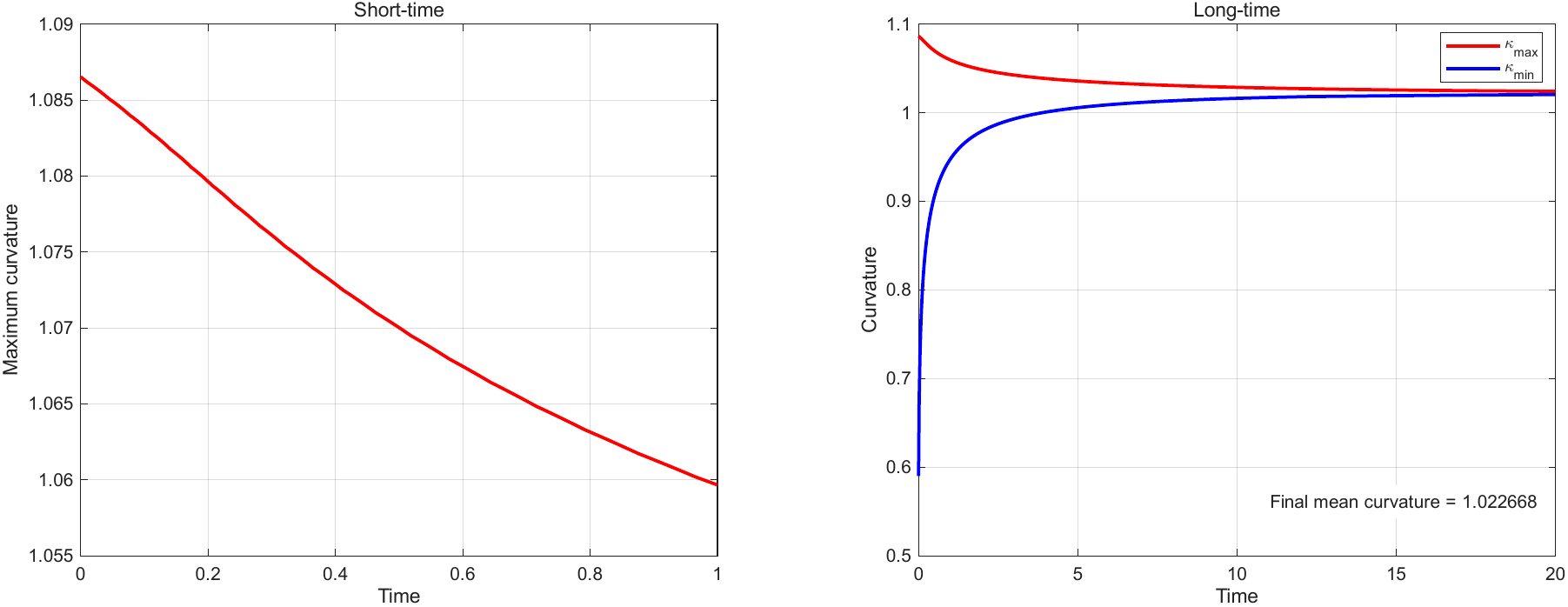}   % 第二张图片（可替换为其他图片）
        \caption{length-preserving for $\alpha$=-1}
        \label{fig:bottom}
    \end{subfigure}

   % \caption{总标题：两张竖排图片}
    \label{fig:vertical}
\end{figure}

	%%%%%%%%%%%%%%%%%%%%%%%%%%%%%%%%%%%%%%%%%%%%%%%%%%%%%%%%%%%%%%%%%%%%

\end{document}